\newtheorem{thm}{Theorem}[section]
\newtheorem*{thm*}{Theorem}
\newtheorem{lem}[thm]{Lemma}
\newtheorem{cor}[thm]{Corollary}
\theoremstyle{definition}
\newtheorem{defi}{Definition}
\theoremstyle{definition}
\newtheorem{ex}{Example}
\theoremstyle{remark}
\newtheorem{rem}{Remark}
\title[Torsion normal generators of the mapping class group]{Torsion normal generators\\ of the mapping class group\\ of a non-orientable surface}
\author{Marta Le\'sniak}
\address{Institute of Mathematics, Faculty of Mathematics, Physics and Informatics, University of Gda\'nsk, 80-308 Gda\'nsk, Poland}
\email{marta.lesniak@mat.ug.edu.pl}
\thanks{Supported by National Science Centre, Poland, grant 2015/17/B/ST1/03235.}
\begin{document}

\begin{abstract}
We show that the normal closure of any periodic element of the mapping class group of a non-orientable surface whose order is greater than 2 contains the commutator subgroup, which for $g\geq 7$ is equal to the twist subgroup, and provide necessary and sufficient conditions for the normal closures of involutions to contain the twist subgroup. Finally, we provide a criterion for a periodic element to normally generate $\mathcal{M}(N_g)$ and give examples.
\end{abstract}

\maketitle

\section{Introduction}

In \cite{LM}, Lanier and Margalit provided a comprehensive look at the normal closures of elements of the mapping class group of an orientable surface. This paper aims to provide analogous results for periodic elements of a mapping class group of a non-orientable surface.

It is known that the mapping class group of a closed surface is generated by elements of finite order. This fact was first proved for an orientable surface by Maclachlan \cite{McL2}, who used the result in his proof of simple connectivity of the moduli space of Riemann surfaces. Since then many papers have been devoted to torsion generators of mapping class groups. For a closed orientable surface $S_g$ of genus $g\ge 3$, McCarthy-Papadopoulos \cite{MP} and Korkmaz \cite{MK2} found examples of an element $f\in\mathcal{M}(S_g)$ of finite order ($2$ and $4g+2$, respectively) whose normal closure in $\mathcal{M}(S_g)$ is the entire group. We say that such an $f$ {\it normally generates} $\mathcal{M}(S_g)$. The result of Lanier and Margalit is that any non-trivial periodic mapping class other than the hyperelliptic involution is a normal generator of $\mathcal{M}(S_g)$.

In the case of a closed non-orientable surface $N_g$ it is known that $\mathcal{M}(N_g)$ is generated by involutions \cite{Sz2}. The author of this paper together with Szepietowski \cite{LSz} have recently proved that $\mathcal{M}(N_g)$ is normally generated by one element of infinite order for $g\ge 7$.
 
While in the orientable case the mapping class group is generated by Dehn twists as shown by Dehn and Lickorish \cite{Lick}, in the non-orientable case Dehn twists generate a subgroup of index 2 known as the twist subgroup $\mathcal{T}(N_g)$ \cite{Lick1}. Similarly, where in the orientable case the commutator subgroup of the mapping class group $\mathcal{M}(S_g)$ is equal to the entire group for $g\geq 3$ \cite{Pow}, in our case the commutator subgroup of $\mathcal{M}(N_g)$ is equal to the twist subgroup for $g\geq 7$ \cite{MK}. If we use methods similar to the ones employed in \cite{LM}, the question we answer is whether the normal closure of a given element contains the twist subgroup of $N_g$. If the answer is positive, this still leaves two possibilities: the normal closure can be either the twist subgroup itself or the entire mapping class group. We learn to distinguish between these cases and indicate several finite order normal generators of $\mathcal{M}(N_g)$.

Another difference between the orientable and non-orientable cases is the abundance of involutions, that is, elements of order 2. Orientation-preserving involutions with fixed points on a given orientable surface form a 1-parameter family and are determined up to conjugacy by the genus of the quotient orbifold \cite{Dug}. On the contrary, involutions on non-orientable surfaces are determined by up to six invariants \cite{Dug} and we look closely at each case to formulate the sufficient and necessary conditions for their normal closures to contain the twist subgroup.

With that in mind, it is no surprise that our results come in two main cases: for periodic mapping classes of order greater than 2 and for involutions. But before we move onto the main results, a few words about notation.

Any periodic mapping class $f\in\mathcal{M}(N_g)$ is represented by a homeomorphism whose order is equal to that of $f$; this was proved by Kerckhoff \cite{Kerck} in the orientable case and in the non-orientable case by applying double covering by an orientable surface (see the last two paragraphs of Section IV, page 256, in \cite{Kerck}). Moreover, this homeomorphism can be chosen to be an isometry with respect to some hyperbolic metric on $N_g$ and is unique up to conjugacy in the group of homeomorphisms of $N_g$. We refer to any such representative as a standard representative of $f$ and denote it by $\phi$.

The set of fixed points of $\phi$ consists of isolated fixed points and point-wise-fixed simple closed curves we will refer to as ovals. Note that ovals appear only in the case of involutions; elements of greater finite order only have isolated fixed points. Let $r$ denote the cardinality of the set of isolated fixed points of $\phi$, $k$ -- the cardinality of the set of ovals, $k_{+}$ -- two-sided ovals and $k_{-}$ -- one-sided ovals.

Now we are ready to formulate the main results.

\begin{thm}\label{noninv}
	Let $g\geq 5$ and let $f$ be a periodic element of $\mathcal{M}(N_g)$ of order greater than 2. The normal closure of $f$ contains the commutator subgroup of $\mathcal{M}(N_g)$.
	
\end{thm}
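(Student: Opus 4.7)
The plan is to adapt the well-suited curve technique of Lanier--Margalit to the non-orientable setting. Replace $f$ by its standard representative $\phi$, an isometry of order $n>2$ of some hyperbolic metric on $N_g$; because $n>2$, the fixed set of $\phi$ consists of finitely many isolated points and the quotient orbifold $\mathcal{O} = N_g/\langle\phi\rangle$ has only cone points as singularities. The central observation is that for any two-sided simple closed curve $c$ with $\phi(c)\neq c$,
\[
[T_c,f] \;=\; T_c\, f T_c^{-1} f^{-1} \;=\; T_c\, T_{\phi(c)}^{-1}
\]
lies in the normal closure of $f$, and is itself a commutator in $\mathcal{M}(N_g)$ because $T_{\phi(c)}$ is conjugate to $T_c$.

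The first step is therefore to produce such a curve $c$. By the Riemann--Hurwitz formula together with the hypothesis $g\geq 5$, the quotient orbifold $\mathcal{O}$ has enough topological complexity to carry an essential two-sided simple closed curve $\bar\alpha$ disjoint from the cone points; a connected lift $c$ of $\bar\alpha$ is then a two-sided simple closed curve in $N_g$ displaced by $\phi$. Where possible I would further refine the choice so that $c$ and $\phi(c)$ have geometric intersection number exactly $1$, which would let me extract additional information via the braid and chain relations.

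Next I would boost the single element $T_c T_{\phi(c)}^{-1}$ to the whole commutator subgroup. By the change-of-coordinates principle, any pair $(a,b)$ of two-sided simple closed curves with the same topological configuration as $(c,\phi(c))$ is the image of $(c,\phi(c))$ under some $h\in\mathcal{M}(N_g)$; conjugating by $h$ shows $T_a T_b^{-1}\in\langle\!\langle f\rangle\!\rangle$. Using the non-orientable lantern relation and Korkmaz's description of the twist subgroup and its commutator structure \cite{MK}, these elements together with their conjugates normally generate the commutator subgroup; equivalently, the quotient of $\mathcal{M}(N_g)$ by the normal closure of all such $T_aT_b^{-1}$ is abelian.

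The principal obstacle is the uniform construction of $c$: one must verify the existence of the two-sided lift $\bar\alpha$ for every conjugacy class of periodic isometries of order greater than $2$, which via Riemann--Hurwitz reduces to a finite case analysis on the genus and cone-point data of $\mathcal{O}$, with the borderline cases (few cone points, small orbifold genus) handled by hand. A second subtlety is that for $g=5,6$ the commutator subgroup is strictly smaller than the twist subgroup, so the final step must be delivered with care, tracking the crosscap-slide contribution to the abelianization; for $g\geq 7$ the commutator subgroup coincides with $\mathcal{T}(N_g)$ and the argument simplifies accordingly.
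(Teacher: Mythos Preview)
Your overall strategy---produce a curve $c$ with $(c,\phi(c))$ a ``standard pair'' and then invoke the fact that $T_cT_{\phi(c)}^{-1}$ normally generates the commutator subgroup---is exactly the paper's strategy (Lemmas~\ref{L2.1} and~\ref{L2.2}). But two of your steps contain real gaps.

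First, your structural claim about the quotient orbifold is wrong. You assert that because $n>2$ the orbifold $\mathcal{O}=N_g/\langle\phi\rangle$ has only cone points. It is true that $\phi$ itself has only isolated fixed points, but the singular set of $\mathcal{O}$ records the fixed points of \emph{every} nontrivial power of $\phi$. When $n$ is even, $\phi^{n/2}$ is an involution that may fix ovals, and these become reflector boundary components of $\mathcal{O}$. This is not a corner case: in the paper's treatment of order a power of $2$ (reduced to $n=4$), the NEC signature carries $k\ge 1$ empty period cycles in many subcases, and the construction of $c$ interacts essentially with those boundary components.

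Second, your proposed construction of $c$ by lifting a two-sided essential curve $\bar\alpha\subset\mathcal{O}$ is too weak. Any such lift $c$ is automatically disjoint from $\phi(c)$, so your later remark ``refine the choice so that $i(c,\phi(c))=1$'' cannot succeed along this route. You are therefore confined to type~2 standard pairs, which require that $N_g\setminus(c\cup\phi(c))$ be connected and non-orientable; you do not address either condition, and neither follows for free from a generic lift (for instance, a curve bounding a disc-with-cone-points in $\mathcal{O}$ lifts to curves whose complement need not be connected). The paper resolves this by abandoning the lifting picture and instead building $c$ explicitly inside a fundamental polygon for the NEC group $\Gamma$, tiling $D=P\cup\tilde yP\cup\cdots\cup\tilde y^{n-1}P$ and drawing arcs that are closed up by specific elements of $\Gamma$; this is what lets them obtain type~1 pairs and verify the type~2 conditions when needed. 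Your ``finite case analysis on the genus and cone-point data of $\mathcal{O}$'' is the right shape of argument, but the actual work is substantially more intricate than your sketch indicates---it occupies Cases~2 and~3 (with Subcases~3.1--3.4) of the paper and uses the automorphisms of $\Lambda$ listed in Section~\ref{topeq} to normalise $\theta$ before constructing~$c$.

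Your boosting step is closer to correct, though the lantern relation is a red herring: once you have \emph{one} pair $(c,\phi(c))$ of the right topological type, the paper's Lemma~\ref{L2.1} (ultimately Corollary~\ref{cor}, which rests on Szepietowski's lemma that commuting images of intersecting twists force an abelian quotient) finishes the argument directly, with no separate tracking of crosscap slides needed for $g=5,6$.
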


\begin{cor}\label{ng}
Let $g\geq 7$ and let $f$ be a periodic element of $\mathcal{M}(N_g)$ of order greater than 2. The normal closure of $f$ is either $\mathcal{T}(N_g)$ or $\mathcal{M}(N_g)$, the latter if and only if $f\notin \mathcal{T}(N_g)$.
\end{cor}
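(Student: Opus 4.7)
The plan is to deduce the corollary directly from Theorem~\ref{noninv} by invoking two facts already recalled in the introduction: for $g \geq 7$ the commutator subgroup $[\mathcal{M}(N_g),\mathcal{M}(N_g)]$ coincides with the twist subgroup $\mathcal{T}(N_g)$ by Korkmaz \cite{MK}, and $\mathcal{T}(N_g)$ has index $2$ in $\mathcal{M}(N_g)$ by Lickorish \cite{Lick1}. Since Theorem~\ref{noninv} gives $[\mathcal{M}(N_g),\mathcal{M}(N_g)] \subseteq \langle\langle f\rangle\rangle$, the normal closure of $f$ sits between $\mathcal{T}(N_g)$ and $\mathcal{M}(N_g)$, a sandwich of index~$2$.

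First I would note that any subgroup of $\mathcal{M}(N_g)$ strictly containing $\mathcal{T}(N_g)$ must equal $\mathcal{M}(N_g)$, by the index-$2$ observation. Hence there are only two options for $\langle\langle f\rangle\rangle$: either $\mathcal{T}(N_g)$ itself, or the full group $\mathcal{M}(N_g)$.

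Next, I would distinguish the two cases via the image of $f$ in the quotient $\mathcal{M}(N_g)/\mathcal{T}(N_g) \cong \mathbb{Z}/2$. If $f \in \mathcal{T}(N_g)$, then because $\mathcal{T}(N_g)$ is normal in $\mathcal{M}(N_g)$ (being of index~$2$, or equivalently the commutator subgroup), we have $\langle\langle f\rangle\rangle \subseteq \mathcal{T}(N_g)$; combined with the reverse inclusion from Theorem~\ref{noninv}, equality follows. If $f \notin \mathcal{T}(N_g)$, then $\langle\langle f\rangle\rangle$ properly contains $\mathcal{T}(N_g)$, so by the index-$2$ dichotomy it must be all of $\mathcal{M}(N_g)$.

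There is no real obstacle here: the work has been absorbed into Theorem~\ref{noninv}, and the corollary is a purely group-theoretic consequence of that theorem together with the two structural facts about $\mathcal{T}(N_g)$. The only thing to be careful about is to cite the hypothesis $g \geq 7$ at the correct place, namely when identifying the commutator subgroup with $\mathcal{T}(N_g)$; the index-$2$ statement and Theorem~\ref{noninv} itself only require $g \geq 5$, but the identification $[\mathcal{M}(N_g),\mathcal{M}(N_g)] = \mathcal{T}(N_g)$ is what forces the stronger bound.
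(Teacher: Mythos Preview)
Your argument is correct and matches the paper's intended reasoning: the corollary is stated without explicit proof precisely because it follows from Theorem~\ref{noninv} together with Theorem~\ref{twist} (the identification $[\mathcal{M}(N_g),\mathcal{M}(N_g)]=\mathcal{T}(N_g)$ for $g\ge 7$) and the index-$2$ fact from \cite{Lick1}, exactly as you outline. The paper also records the equivalent determinant criterion via Theorem~\ref{thm:twsb}, but the group-theoretic dichotomy you give is the same underlying mechanism.
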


\begin{rem}
	
	Thanks to Corollary \ref{ng}, we can give an example of a torsion normal generator of $\mathcal{M}(N_g)$ for any $g\geq 7$. Let us represent $N_g$ as a sphere with $g$ crosscaps. If $g$ is even, one element normally generating $\mathcal{M}(N_g)$ is a rotation of order $g$ of the sphere with $g$ crosscaps spaced evenly along the equator. If $g$ is odd, one such element is a rotation of order $g-1$ of the sphere with $g-1$ crosscaps spaced evenly along the equator and one stationary crosscap at one of the poles (see Figure \ref{figexamples}).
\sloppy
By computing the determinant of the induced automorphism of $H_1(N_g;\mathbb{R})$ it can be shown that these elements are not in the twist subgroup (see Theorem \ref{thm:twsb} in Section \ref{sec:prem}). Details are left as an exercise to the reader.
	
\end{rem}

\begin{figure}[!htbp]\begin{center}
\includegraphics[scale=.35]{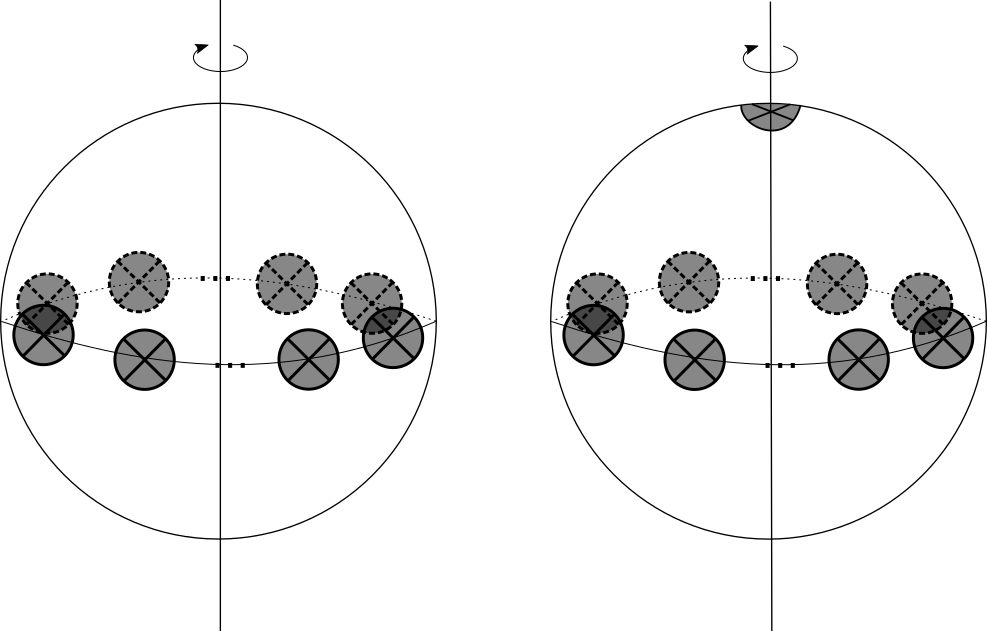}
\caption{Torsion normal generators of $\mathcal{M}(N_g)$ for even and odd $g$, respectively.}
\label{figexamples}\end{center}\end{figure}

\begin{rem}
For $2\leq g\leq 6$ the group $\mathcal{M}(N_g)$ is not normally generated by one element, because its abelianisation is not cyclic \cite{MK}.

\end{rem}

\begin{thm}\label{inv}
	Let $g\geq 5$ and let $f\in\mathcal{M}(N_g)$ be an involution, $\phi$ its standard representative and $r$, $k$, $k_-$, $k_+$ the parameters defined above.
	The normal closure of $f$ in $\mathcal{M}(N_g)$ contains the commutator subgroup of $\mathcal{M}(N_g)$ if and only if one of the following conditions hold.

	\begin{enumerate}

		\item $r>0$, $k=0$ and $g-r\geq 4$.
		
		\item $k>0$, $r+k_->0$ and either:
		
		\begin{enumerate}
			\item the orbifold $N_g/\left\langle \phi \right\rangle$ is non-orientable, or
			\item the orbifold $N_g/\left\langle \phi \right\rangle$ is orientable and $g-r-2k\geq 2$.
		\end{enumerate}
		\item $r=k_{-}=0$ and either
		
		\begin{enumerate}
			\item the set of fixed points of $\phi$ is separating and $g-2k\geq 4$, or
			
			\item the set of fixed points of $\phi$ is non-separating.
			
		\end{enumerate}
		
	\end{enumerate}

\end{thm}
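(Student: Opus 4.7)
The plan is to adapt the strategy of Lanier--Margalit \cite{LM} to the non-orientable setting. The central tool is the identity
\[
f\, T_c\, f^{-1} = T_{\phi(c)}^{\epsilon} \qquad (\epsilon=\pm 1),
\]
valid for any two-sided simple closed curve $c\subset N_g$, so that the commutator $[f,T_c]=T_{\phi(c)}^{\epsilon} T_c^{-1}$ lies in the normal closure $\langle\langle f\rangle\rangle$. When $\phi(c)$ and $c$ are disjoint and non-isotopic, this produces a non-trivial element of $\langle\langle f\rangle\rangle$ whose topological content one can exploit. By varying $c$ and conjugating, the goal is to produce enough such elements to generate the twist subgroup $\mathcal{T}(N_g)$, which for $g\ge 7$ coincides with the commutator subgroup; for $5\le g\le 6$ the same method, followed by an algebraic step on the abelianisation of $\mathcal{M}(N_g)$, will suffice.

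For the sufficient direction I would proceed case by case. In each configuration of the theorem the hypotheses are calibrated to guarantee the existence of a two-sided non-separating curve $c$ whose $\phi$-image is disjoint from and non-isotopic to it: in case (1) the bound $g-r\ge 4$ provides enough non-orientable genus in the complement of the isolated fixed points; in case (2) the orientability hypothesis on $N_g/\langle\phi\rangle$ together with the inequality $g-r-2k\ge 2$ controls the available room alongside the ovals, and the condition $r+k_->0$ ensures that at least one fixed point or one-sided oval is present to break symmetry; in case (3) the separating/non-separating dichotomy of $\mathrm{Fix}(\phi)$ determines whether $c$ is produced inside a component of the complement or crossing the fixed set. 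Once $c$ is in hand, a change-of-coordinates argument upgrades the resulting twist differences $T_{\phi(c)}^{\epsilon}T_c^{-1}$ to a normal generating set for $\mathcal{T}(N_g)$, invoking the fact (recalled in Section~\ref{sec:prem}) that $\mathcal{T}(N_g)$ is normally generated by a single Dehn twist about a two-sided non-separating curve of the appropriate topological type.

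For the necessary direction I would, in each configuration excluded by the theorem, construct a homomorphism $\rho\colon\mathcal{M}(N_g)\to G$ with $\rho(f)=1$ but $\rho(\mathcal{T}(N_g))\ne 1$. When the complement $N_g\setminus\mathrm{Fix}(\phi)$ is too small, e.g.\ case (1) with $g-r<4$, case (2b) with $g-r-2k<2$, or case (3a) with $g-2k<4$, the equivariant mapping class group of the pair $(N_g,\phi)$ admits a sign or permutation quotient detecting the failure; the homological action of $\phi$ on $H_1(N_g;\mathbb{R})$, combined with the determinant criterion of Theorem~\ref{thm:twsb}, pins down the obstructions. The main obstacle I expect is the sheer combinatorial variety of involutions: Dugger's classification \cite{Dug} attaches up to six invariants to an involution on $N_g$, and the most delicate subcases are those of (2) where the quotient orbifold is orientable but ovals are present, since there the curve $c$ must simultaneously avoid the ovals and have a $\phi$-orbit of controlled topology. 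Both the positive construction and the negative obstruction in those subcases will require a hands-on analysis on the quotient surface.
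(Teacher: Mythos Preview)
Your proposal has the right architecture---produce elements of $\langle\langle f\rangle\rangle$ from commutators $[f,T_c]$ for sufficiency, and exhibit a homomorphism killing $f$ but not the commutator subgroup for necessity---but each direction has a genuine gap relative to what the paper actually does.

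For sufficiency, you look for a curve $c$ with $\phi(c)$ disjoint from and non-isotopic to $c$. That alone is not enough: the engine in the paper is Lemma~\ref{L2.2}, which requires $(c,\phi(c))$ to be a \emph{standard pair}---either $i(c,\phi(c))=1$ (type~1), or $i(c,\phi(c))=0$ with $N_g\setminus(c\cup\phi(c))$ connected and non-orientable (type~2). In several of the cases ($f_2,f_4,f_7,f_8,f_9$) the paper in fact uses type~1 pairs with intersection number~$1$, not disjoint curves; and in the type~2 cases the connectedness and non-orientability of the complement must be verified explicitly. The paper handles this by running through Dugger's list $f_1,\dots,f_9$ and exhibiting the curve in each figure. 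Your change-of-coordinates sketch does not address these complement conditions, and the fact you invoke (``$\mathcal{T}(N_g)$ is normally generated by a single Dehn twist'') is neither stated in Section~\ref{sec:prem} nor what is used: the key input is Lemma~\ref{L2.1}, that the normal closure of $T_cT_d$ with $i(c,d)=1$ equals the commutator subgroup.

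For necessity, your proposed obstruction---the action on $H_1(N_g;\mathbb{R})$ together with the determinant of Theorem~\ref{thm:twsb}---does not work. The determinant detects whether $f\in\mathcal{T}(N_g)$, which is orthogonal to whether $\langle\langle f\rangle\rangle\supseteq[\mathcal{M}(N_g),\mathcal{M}(N_g)]$. More to the point, the excluded involutions typically act by $\pm 1$ on a basis of real homology, which is non-trivial over $\mathbb{R}$ and hence gives no obstruction. The paper instead passes to $\mathbb{Z}_2$-coefficients, where $-1=1$: it uses the tower $V_g=H_1(N_g;\mathbb{Z}_2)$, $V_g^+=\{v:\langle v,v\rangle=0\}$, and $V_g^+/\langle[c]\rangle$, and shows that in each excluded case $f$ acts trivially on one of these spaces while $T_aT_b$ (for $i(a,b)=1$) never does. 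Since $T_aT_b\in[\mathcal{M}(N_g),\mathcal{M}(N_g)]$ by Lemma~\ref{L2.1}, the normal closure of $f$, being contained in the kernel of that action, cannot contain the commutator subgroup. This $\mathbb{Z}_2$-homology filtration is the missing idea in your necessity argument.
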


\begin{thm}\label{thm:normgens}
	For any $g\geq 7$ there is an involution which normally generates $\mathcal{M}(N_g)$.
\end{thm}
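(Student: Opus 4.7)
The plan is to combine Theorem~\ref{inv} with the result of Korkmaz~\cite{MK} that, for $g\geq 7$, the commutator subgroup of $\mathcal{M}(N_g)$ coincides with the twist subgroup $\mathcal{T}(N_g)$, an index-two subgroup of $\mathcal{M}(N_g)$. Consequently, if an involution $f$ satisfies some condition of Theorem~\ref{inv}, its normal closure contains $\mathcal{T}(N_g)$ and therefore equals $\mathcal{M}(N_g)$ precisely when $f\notin\mathcal{T}(N_g)$. Membership in $\mathcal{T}(N_g)$ is detected by $\det(f_*|_{H_1(N_g;\mathbb{R})})$ via Theorem~\ref{thm:twsb}. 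Hence the theorem reduces to producing, for each $g\geq 7$, an involution $f$ satisfying some case of Theorem~\ref{inv} and having $\det(f_*)=-1$.

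For the construction I would use a reflective symmetry of a sphere-with-$g$-crosscaps model of $N_g$, paralleling the rotations in Figure~\ref{figexamples}. Pick $m\in\{0,1,2,3\}$ so that $g-m\equiv 2\pmod 4$; place $m$ crosscaps along the equator and the remaining $g-m$ in symmetric pairs above and below, and let $f$ be the reflection through the equatorial plane. Its fixed set is the equator together with one one-sided oval inside each equatorial crosscap, so $r=0$, $k_+=1$, $k_-=m$. The orbifold $N_g/\langle f\rangle$ is a disk with $(g-m)/2\geq 1$ interior crosscaps and so is non-orientable, hence one verifies case~(3)(a) of Theorem~\ref{inv} when $m=0$ and case~(2)(a) when $m\geq 1$.

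It remains to check $\det(f_*)=-1$. In the crosscap basis $a_1,\dots,a_g$ of $H_1(N_g;\mathbb{R})\cong\mathbb{R}^g/\langle a_1+\cdots+a_g\rangle$, the reflection swaps paired off-equator crosscap generators and fixes each equatorial generator; the latter is forced by the fact that any homeomorphism preserves $a_1+\cdots+a_g$, the reduction of the $\mathbb{Z}/2$-fundamental class. A direct eigenspace count then gives $\det(f_*)=(-1)^{(g-m)/2}$, which equals $-1$ by the choice of $m$. The main technical hurdle in a rigorous write-up is this sign bookkeeping on $H_1$, in particular the justification that the reflection acts trivially on each equatorial crosscap generator; once that step is handled, the four residue classes of $g$ modulo~$4$ are covered uniformly by the single family of involutions above.
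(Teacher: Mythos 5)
Your high-level strategy is the same as the paper's: by Theorems \ref{inv}, \ref{twist} and \ref{thm:twsb} it suffices to exhibit, for each $g\geq 7$, an involution satisfying a case of Theorem \ref{inv} with $\det(f_*)=-1$ on $H_1(N_g;\mathbb{R})$. However, your specific construction and its verification both contain errors. First, the fixed-point data is misidentified. Placing a crosscap centred on the equator is the paper's ``blowing up a non-isolated fixed point'' surgery: the core of that crosscap is \emph{not} pointwise fixed (the reflection acts on it as a reflection of a circle with two fixed points); instead each equatorial crosscap contributes one isolated fixed point and flips the sidedness of the single oval running through it. So your involution has $r=m$, $k=1$ (one oval, one-sided iff $m$ is odd), not $r=0$, $k_+=1$, $k_-=m$. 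Indeed your claimed invariants with $m$ odd violate the parity constraint $r+k_-\equiv 0\pmod 2$ derived in Section \ref{sec:invs}. This part is repairable, since the corrected data still lands in case (2)(a) (for $m\geq 1$) or (3)(a) (for $m=0$) of Theorem \ref{inv}.

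The fatal gap is the determinant. Your justification that the reflection ``fixes each equatorial generator'' because it preserves $a_1+\cdots+a_g$ is vacuous: that sum is $0$ in $H_1(N_g;\mathbb{R})$, and in any case it cannot pin down individual signs. In fact, since the core of an equatorial crosscap is mapped to itself orientation-reversingly, $f_*(a_i)=-a_i$ for each equatorial generator, and consistency with the relation $\sum a_i=0$ then forces the paired generators to satisfy $f_*(b)=-c$, $f_*(c)=-b$. Carrying out the eigenvalue count on the $(g-1)$-dimensional quotient gives $\det(f_*)=(-1)^{m+(g-m)/2-1}$, which with your normalisation $(g-m)/2$ odd equals $(-1)^m$; so for $m\in\{0,2\}$ (i.e.\ for all even $g$) your involution lies in $\mathcal{T}(N_g)$ and does not normally generate. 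The paper avoids this by taking instead a rotation by $\pi$ realising $f_4$ with $k_+=0$, writing out $f_*$ explicitly in a standard basis of $H_1(N_g;\mathbb{R})$ to get $\det(f_*)=(-1)^h$, and then choosing the quotient genus $h$ odd, which is possible for every $g\geq 7$ by adjusting $r$ and $k_-$ via the Hurwitz--Riemann formula.
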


Our results can be interpreted in terms of covers of the moduli space of non-orientable Klein surfaces. A Klein surface is a surface with a dianalytic structure \cite{AG}. Similarly as in the case of Riemann surfaces, the moduli space $\mathfrak{M}_g$ of Klein surfaces homeomorphic to $N_g$ can be defined as the orbit space of a properly discontinuous action of $\mathcal{M}(N_g)$ on a Teichm\"uller space (see \cite{Sz2} and references therein). The moduli space $\mathfrak{M}_g$ is an orbifold whose orbifold fundamental group is  $\mathcal{M}(N_g)$, and whose orbifold points correspond to periodic elements of $\mathcal{M}(N_g)$. Since normal subgroups of $\mathcal{M}(N_g)$ correspond to regular orbifold covers of  $\mathfrak{M}_g$, Theorem \ref{noninv} says that any such cover  of degree greater than $2$ can have orbifold points only of order $2$.

This paper is organised as follows: in \nameref{sec:prem} we review basic concepts about mapping class groups, in Section \ref{sec:stds} we define standard pairs of curves and show how they relate to the main theorems. Afterwards, in Section \ref{sec:nec}, we introduce the language of NEC groups in actions of finite groups on surfaces. We use the notions introduced so far to prove Theorem \ref{noninv} in Section \ref{sec:proofnoninv}, first in the case when the action of $\left\langle \phi \right\rangle$ on $N_g$ is free, then otherwise. In Section \ref{sec:class} we discuss how to construct involutions on surfaces and use the introduced surgeries to list all possible involutions on a surface $N_g$, based on the work of Dugger \cite{Dug}. Finally, in Section \ref{sec:proofinv} we demonstrate how to show that the normal closure of a map does not contain the twist subgroup using the induced action on homology groups with coefficients in $\mathbb{Z}_2$ and prove Theorems \ref{inv} and \ref{thm:normgens}.

\section{Preliminaries}\label{sec:prem}

The non-orientable surface $N_g$ can be represented as an orientable surface of genus $l$ with $s$ crosscaps, where $2l+s=g$ and $s\geq 1$. In all figures of this paper shaded crossed discs represent crosscaps, meaning that the interiors of those discs are removed and antipodal points on their boundaries identified.

The \emph{mapping class group} of the surface $N_g$, $\mathcal{M}(N_g)$, is the quotient of the group of all self-homeomorphisms of $N_g$ by the subgroup of self-homeomorphisms isotopic to the identity.

A \emph{curve} on a surface is a simple closed curve. By an abuse of notation we do not distinguish a curve from its isotopy class or image. If the regular neighbourhood of a curve is an annulus, we call it two-sided, and if it is a M\"{o}bius strip -- one-sided. For $c,d$ -- two curves we take $i(c,d)$ to be their geometric intersection number, that is, $i(c,d)=\min\{|\gamma\cap\delta|: \gamma\in c, \delta\in d\}$.

About a two-sided curve $a$ on $N_g$ we define a Dehn twist $T_a$ (the mapping and the mapping class). Because we are working on a non-orientable surface, the Dehn twist $T_a$ can be one of two, depending on which orientation of a regular neighbourhood of $a$ we choose. Then the Dehn twist in the opposite direction is $T_a^{-1}$. For any $f\in\mathcal{M}(N_g)$ we have

\begin{equation*}
fT_cf^{-1}=T_{f(c)}^\epsilon
\end{equation*}
where $\epsilon\in\{-1,1\}$, again depending on which orientation of a regular neighbourhood of $f(c)$ we choose. The subgroup of $\mathcal{M}(N_g)$ generated by all Dehn twists we call the twist subgroup and denote by $\mathcal{T}(N_g)$. The following lemma is proven in \cite{Sz1}:

\begin{lem}\label{commute}
For $g\geq 5$, let $c$, $d$ be two-sided simple closed curves on $N_{g}^{n}$ such that $i(c,d)=1$ and let $f:\mathcal{M}(N_g)\rightarrow G$ be a homomorphism. If the elements $f({T_c})$ and $f(T_d)$ commute in $G$, the image of $\mathcal{M}(N_g)$ under $f$ is abelian.
\end{lem}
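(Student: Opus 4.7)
My plan is to argue that the commutativity of $f(T_c)$ and $f(T_d)$ forces every Dehn twist to be sent by $f$ to a single element, and then that the image of the remaining generators of $\mathcal{M}(N_g)$ also commutes with this element. The starting point is the braid-type identity satisfied by Dehn twists about two-sided curves with $i(c,d)=1$: whenever a regular neighbourhood of $c\cup d$ is a one-holed torus, the classical braid relation $T_cT_dT_c=T_dT_cT_d$ holds, and substituting the hypothesis $f(T_c)f(T_d)=f(T_d)f(T_c)$ into the image under $f$ of this relation yields $f(T_c)=f(T_d)$.

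The next step is to promote this equality from the particular pair $(c,d)$ to all Dehn twists about two-sided simple closed curves on $N_g$. Using the change-of-coordinates principle one produces, for any two-sided curve $e$, an auxiliary two-sided curve $c'$ of the same topological type as $c$ with $i(c',e)=1$ and a one-holed torus neighbourhood of $c'\cup e$; since $f(T_{c'})$ is conjugate in $G$ to $f(T_c)$, the first step applied to the pair $(c',e)$ gives $f(T_{c'})=f(T_e)$ once $f(T_{c'})$ and $f(T_e)$ are known to commute. To arrange this commutativity one walks along a short chain of two-sided curves $c=c_0,c_1,\dots,c_n=e$ in which consecutive curves intersect once and have orientable neighbourhoods; the genus hypothesis $g\geq 5$ is exactly what ensures that such chains exist and that the corresponding graph of topological types is connected. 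Iterating the braid argument along the chain forces $f(T_{c_i})=f(T_{c_{i-1}})$ at each step, so $f$ sends every Dehn twist to a fixed element $t\in G$ and $f(\mathcal{T}(N_g))=\langle t\rangle$ is abelian.

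To pass from $\mathcal{T}(N_g)$ to all of $\mathcal{M}(N_g)$, one invokes the fact that $\mathcal{M}(N_g)$ is generated by Dehn twists together with a single crosscap slide $Y$, which satisfies $Y^2=T_\gamma$ for some two-sided curve $\gamma$ and which can be conjugated by Dehn twists into configurations whose support is disjoint from a chosen two-sided curve; such disjointness forces $f(Y)$ to commute with $t$, and hence with every element in the image of $\mathcal{T}(N_g)$. Combining the two steps shows that $f(\mathcal{M}(N_g))$ is generated by pairwise commuting elements, and is therefore abelian. The main obstacle I anticipate is the second step: verifying that any two topological types of two-sided simple closed curves can be joined by the required chain of once-intersecting pairs with orientable neighbourhoods, which is exactly where the lower bound $g\geq 5$ enters and where one must take care because the neighbourhood of $c\cup d$ in the original hypothesis is not assumed to be orientable.
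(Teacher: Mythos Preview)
The paper does not give its own proof of this lemma; it simply quotes the result from \cite{Sz1}. Your outline is essentially the standard argument and is on the right track, but two points deserve correction or sharpening.

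First, your closing worry is unfounded: if $c$ and $d$ are both two-sided and $i(c,d)=1$, a regular neighbourhood of $c\cup d$ is automatically a one-holed torus. Each curve has an orientable annular neighbourhood, and since the curves cross exactly once these orientations can be made compatible at the intersection; non-orientability of the ambient surface plays no role here. Hence the braid relation $T_cT_dT_c=T_dT_cT_d$ (for a suitable choice of the twists) is always available, and the deduction $f(T_c)=f(T_d)^{\pm1}$ goes through.

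Second, the inductive step in your chain argument is circular as written. To conclude $f(T_{c_{i}})=f(T_{c_{i+1}})$ from the braid relation you need $f(T_{c_{i}})$ and $f(T_{c_{i+1}})$ to commute, and nothing so far gives this for $i\ge 1$. The missing ingredient is disjointness: if the chain is chosen with $i(c_{i-1},c_{i+1})=0$, then $T_{c_{i-1}}$ and $T_{c_{i+1}}$ already commute in $\mathcal{M}(N_g)$, hence so do their images, and since $f(T_{c_{i-1}})=f(T_{c_i})$ by induction you obtain the required commutativity for the next step. In practice it is cleanest to first apply change of coordinates: the complement of a regular neighbourhood of $c\cup d$ is a one-holed $N_{g-2}$, so all such pairs lie in a single $\mathcal{M}(N_g)$-orbit and one may assume $(c,d)$ is a fixed consecutive pair in a standard Lickorish-type chain of curves whose twists (together with one crosscap slide) generate $\mathcal{M}(N_g)$. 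Propagating along that specific chain, rather than trying to reach \emph{every} two-sided curve, is all that is needed; your treatment of the crosscap slide is then fine, since one can choose a generating twist supported away from it.
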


\begin{cor}\label{cor}
Let $c$, $d$ and $f$ be as in the lemma above. The kernel of $f$ contains the commutator subgroup of $\mathcal{M}(N_g)$.
\end{cor}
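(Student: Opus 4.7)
The plan is to derive this as an immediate consequence of Lemma \ref{commute}. Since $c$, $d$ are two-sided simple closed curves on $N_g$ with $i(c,d)=1$ and $f(T_c)$, $f(T_d)$ commute in $G$, the hypothesis of Lemma \ref{commute} is satisfied, so the image $f(\mathcal{M}(N_g))$ is abelian. There is essentially no additional content beyond this observation; the corollary is just the standard reformulation of an abelian image in terms of the kernel.

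The second step is to invoke the universal property of the abelianization: a group homomorphism $f \colon H \to G$ has abelian image if and only if $\ker(f)$ contains the commutator subgroup $[H,H]$, since in that case $f$ factors through the quotient map $H \to H/[H,H]$. Applying this to $H = \mathcal{M}(N_g)$ gives the stated inclusion $[\mathcal{M}(N_g),\mathcal{M}(N_g)] \subseteq \ker(f)$. There is no real obstacle here, since the non-trivial work is already contained in Lemma \ref{commute}; the corollary merely repackages its conclusion into the form most convenient for the applications to normal closures that follow in later sections.
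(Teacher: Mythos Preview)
Your proof is correct and matches the paper's approach exactly: the paper states the corollary without proof, treating it as an immediate consequence of Lemma \ref{commute} via the standard fact that a homomorphism has abelian image if and only if its kernel contains the commutator subgroup.
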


\begin{thm}\label{twist}\cite{MK}
For $g\geq 7$, $[\mathcal{M}(N_{g}),\mathcal{M}(N_{g})]=\mathcal{T}(N_{g})$. For $g=5,6$ $[\mathcal{M}(N_{g}),\mathcal{M}(N_{g})]$ has index 4 in $\mathcal{M}(N_{g})$.
\end{thm}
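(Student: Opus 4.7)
The plan is to compute the abelianization $\mathcal{M}(N_g)^{\mathrm{ab}} = \mathcal{M}(N_g)/[\mathcal{M}(N_g),\mathcal{M}(N_g)]$ directly, since the statement is equivalent to saying $\mathcal{M}(N_g)^{\mathrm{ab}} \cong \mathbb{Z}_2$ for $g \geq 7$ and has order $4$ for $g = 5,6$. One inclusion is essentially free: Lickorish's theorem that $\mathcal{T}(N_g)$ has index $2$ in $\mathcal{M}(N_g)$ gives a surjection $\mathcal{M}(N_g) \to \mathbb{Z}_2$ with kernel $\mathcal{T}(N_g)$, and since $\mathbb{Z}_2$ is abelian we immediately get $[\mathcal{M}(N_g),\mathcal{M}(N_g)] \subseteq \mathcal{T}(N_g)$. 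So everything reduces to proving that every Dehn twist lies in the commutator subgroup when $g \geq 7$, and to quantifying how far this fails when $g = 5,6$.

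For this I would start from a finite presentation of $\mathcal{M}(N_g)$ whose generators are Dehn twists about a fixed collection of two-sided simple closed curves together with a single crosscap slide (``$Y$-homeomorphism'') $y$. The change-of-coordinates principle shows that all Dehn twists about non-separating two-sided curves are conjugate in $\mathcal{M}(N_g)$, hence map to a single class $t$ in the abelianization; and Dehn twists about separating curves can be written as products of non-separating twists, so they also reduce to multiples of $t$. The crosscap slide contributes a second generator $[y]$, and the relation $y^2 = T_c$ (for a suitable two-sided curve $c$) shows $2[y] = t$ in the abelianization. Thus $\mathcal{M}(N_g)^{\mathrm{ab}}$ is cyclic of order $2$ as soon as we can prove $t = 0$.

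The core geometric input is then a surface relation forcing $t$ to vanish. The natural candidate is a lantern relation on an embedded four-holed sphere: this gives an equation of the form $T_{a}T_{b}T_{c} = T_{x}T_{y}T_{z}T_{w}$ among Dehn twists about non-separating curves, which abelianizes to $3t = 4t$, i.e., $t = 0$. Embedding a four-holed sphere in $N_g$ in such a way that all seven curves are two-sided and non-separating requires sufficient topological room in the complement, and a direct count shows that $g \geq 7$ is exactly the threshold that makes such an embedding possible while keeping every boundary component non-separating in the full surface. This is the step I expect to be the main obstacle: carefully producing the embedded lantern configuration, and verifying each curve is non-separating so that its twist really represents the same class $t$.

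For $g = 5$ and $g = 6$ the lantern argument breaks down: any embedded four-holed sphere has at least one boundary component whose twist is not conjugate to $t$, so the relation only gives information modulo a second generator. A separate argument using a chain relation (or an inspection of the pantalon / BP-twist relations peculiar to small non-orientable genus) yields one further relation of order $2$, producing $\mathcal{M}(N_g)^{\mathrm{ab}} \cong \mathbb{Z}_2 \oplus \mathbb{Z}_2$ and hence index $4$. In both cases the matching lower bound on the size of the abelianization is obtained by exhibiting an explicit homomorphism $\mathcal{M}(N_g) \to \mathbb{Z}_2$ (resp.\ $\mathbb{Z}_2 \oplus \mathbb{Z}_2$) that is nontrivial on the relevant generators, for example via the determinant of the induced action on $H_1(N_g;\mathbb{Z}_2)$ together with, for small $g$, a second invariant coming from the action on a Lagrangian subspace.
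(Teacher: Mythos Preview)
The paper does not prove this theorem at all: it is quoted verbatim from Korkmaz \cite{MK} and used as a black box. Your proposal is therefore not competing with any argument in the present paper, but with Korkmaz's original computation of $H_1(\mathcal{M}(N_g);\mathbb{Z})$, and your outline is in fact a sketch of that same strategy (abelianise a finite presentation, kill the twist class via a lantern relation once $g\ge 7$).

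Two points in your sketch would need repair before it becomes a proof. First, it is \emph{not} true that all Dehn twists about non-separating two-sided curves in $N_g$ are conjugate: when $g$ is even there is a second orbit, consisting of curves whose complement is orientable, so the ``single class $t$'' you posit does not come for free from change of coordinates. Korkmaz handles this by working with an explicit generating set and tracking the relations among the specific twists appearing in it, rather than by a blanket conjugacy claim. Second, your relation $y^2=T_c$ gives $2[y]=[T_c]$, but the curve $c$ bounds a one-holed Klein bottle and need not lie in the same class as a generic non-separating twist; indeed, if $2[y]=t$ held and $[y],t$ generated the abelianisation, the abelianisation would be cyclic for every $g$, contradicting the $\mathbb{Z}_2\oplus\mathbb{Z}_2$ answer you correctly state for $g=5,6$ (and which the paper's own Remark after Corollary~\ref{ng} relies on). So the interaction between $[y]$ and the twist classes is more delicate than your sketch suggests, and sorting it out is exactly the content of \cite{MK}.
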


Now consider the action of $\mathcal{M}(N_g)$ on $H_1(N_g;\mathbb{R})$. For $f\in\mathcal{M}(N_g)$ and $f_*:H_1(N_g;\mathbb{R})\rightarrow H_1(N_g;\mathbb{R})$ the induced homomorphism, we define the \emph{determinant homomorphism} $D:\mathcal{M}(N_g)\rightarrow\{-1,1\}$ as $D(f)=\det(f_*)$.

\begin{thm}\label{thm:twsb}\cite{MS}
For $f\in \mathcal{M}(N_g)$, $\det(f_*)=1$ if and only if $f\in\mathcal{T}(N_g)$.
\end{thm}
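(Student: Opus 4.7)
The plan is to show that $D$ is a nontrivial homomorphism whose kernel contains $\mathcal{T}(N_g)$, and then conclude by an index argument. Since the induced action on homology is functorial, $f \mapsto f_\ast$ is a homomorphism into $\mathrm{GL}(H_1(N_g;\mathbb{R}))$, and composing with $\det$ lands in $\{\pm 1\}$ because the image consists of isomorphisms of a finitely generated $\mathbb{R}$-vector space and each mapping class has finite-order determinant (indeed, $D(f)^2 = D(f^2)$ and $D$ takes integer values on a lattice). So $D : \mathcal{M}(N_g) \to \{\pm 1\}$ is a well-defined homomorphism.

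For the inclusion $\mathcal{T}(N_g) \subseteq \ker D$, it is enough to verify that each generator $T_c$ of $\mathcal{T}(N_g)$ has $\det((T_c)_\ast) = 1$. The twist $T_c$ is supported in an annular neighbourhood of the two-sided curve $c$, so on $H_1(N_g;\mathbb{R})$ it acts by a transvection
\[
(T_c)_\ast(x) = x + \lambda(x,c)\,[c],
\]
where $\lambda(\cdot, c)$ is the linear functional recording the algebraic intersection with $c$ computed in the chosen orientation of the annular neighbourhood. In a basis of $H_1(N_g;\mathbb{R})$ extending $[c]$, the matrix of $(T_c)_\ast$ is upper triangular with $1$'s on the diagonal, so $\det((T_c)_\ast) = 1$.

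For the reverse inclusion, by Lickorish we have $[\mathcal{M}(N_g) : \mathcal{T}(N_g)] = 2$, so it suffices to exhibit a single mapping class $f$ with $D(f) = -1$; then $D$ is surjective, $\ker D$ has index $2$, and the containment $\mathcal{T}(N_g) \subseteq \ker D$ together with equal indices forces equality. A natural candidate is a Y-homeomorphism (crosscap slide) $Y_{\mu, c}$, built on a one-sided curve $\mu$ and a two-sided curve $c$ meeting $\mu$ once, which is well known to lie in $\mathcal{M}(N_g) \setminus \mathcal{T}(N_g)$. One then computes its action on a standard basis of $H_1(N_g;\mathbb{R})$ adapted to the crosscap decomposition and reads off the determinant.

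The main obstacle is the explicit homology computation in the last step: one must carefully set up a basis for $H_1(N_g;\mathbb{R})$ coming from the crosscap/handle decomposition, identify representative cycles for $[c]$ and for the one-sided curve $\mu$ in that basis (keeping in mind that $[\mu]$ is $2$-torsion and so vanishes in real homology, which already forces care in tracking the action), and then evaluate $(Y_{\mu,c})_\ast$ on each basis element. Everything else in the argument is formal.
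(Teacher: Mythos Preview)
The paper does not give its own proof of this statement: Theorem~\ref{thm:twsb} is quoted from Stukow \cite{MS} and used as a black box. So there is nothing in the present paper to compare your argument against; you are effectively reproving the cited result.

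Your overall strategy is the right one and matches the standard proof: show $\mathcal{T}(N_g)\subseteq\ker D$ because Dehn twists act as transvections, then use Lickorish's index-$2$ result together with one explicit element of determinant $-1$ to force equality. Two small points deserve tightening. First, your justification that $D$ lands in $\{\pm1\}$ is garbled (the line ``$D(f)^2=D(f^2)$'' proves nothing by itself); the clean reason is that $f_\ast$ preserves the image of the integral lattice $H_1(N_g;\mathbb{Z})/\mathrm{torsion}$ inside $H_1(N_g;\mathbb{R})$, so its matrix and inverse are integral. Second, be careful with the transvection formula: on a non-orientable surface there is no global skew-symmetric intersection pairing on $H_1(N_g;\mathbb{R})$, so ``algebraic intersection with $c$'' has to be interpreted via a local orientation of the annulus, exactly as you indicate; the conclusion $\det((T_c)_\ast)=1$ is correct but the phrasing should not suggest a global form.

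For the crosscap slide, the computation you flag as the main obstacle is genuinely straightforward once set up: model $N_g$ as a sphere with $g$ crosscaps and take the standard generators $[c_1],\dots,[c_{g-1}]$ of $H_1(N_g;\mathbb{R})$ (with $[c_g]=-\sum_{i<g}[c_i]$). A crosscap slide of the first crosscap through the second sends $[c_1]\mapsto -[c_1]+2[c_2]$ and fixes $[c_i]$ for $i\geq 2$; in this basis the matrix is lower-triangular with a single $-1$ on the diagonal, giving $D=-1$. Your worry that $[\mu]$ vanishes over $\mathbb{R}$ is a red herring: you never need $[\mu]$ as a basis vector, only the action on the $[c_i]$.
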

We immediately conclude that, provided the normal closure of $f$ in $\mathcal{M}(N_g)$ contains the twist subgroup, the normal closure of $f$ is equal to $\mathcal{M}(N_g)$ if and only if $\det(f_*)=-1$. This fact allows us to look for normal generators of $\mathcal{M}(N_g)$.

\section{Standard pairs of curves}\label{sec:stds}

The following two lemmas are adaptations of Lemmas 2.1, 2.2, 2.3 proven in \cite{LM} for orientable surfaces with minor changes in assumptions and proofs.

\begin{lem}\label{L2.1}

For $g\geq 5$, let $c$ and $d$ be two-sided non-separating curves in $N_{g}$ with $i(c,d)=1$. Then the normal closure of $T_{c}T_{d}$ in $\mathcal{M}(N_{g})$ is equal to the commutator subgroup of $\mathcal{M}(N_{g})$.

\end{lem}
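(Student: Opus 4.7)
The plan is to prove the two inclusions separately using the tools in the preliminaries.

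For $[\mathcal{M}(N_g),\mathcal{M}(N_g)]\subseteq\langle\langle T_cT_d\rangle\rangle$, I would work in the quotient $Q=\mathcal{M}(N_g)/\langle\langle T_cT_d\rangle\rangle$. There the images of $T_c$ and $T_d$ are mutual inverses, hence commute. Since $i(c,d)=1$, Lemma \ref{commute} applied to the quotient map $\pi\colon\mathcal{M}(N_g)\to Q$ shows $Q$ is abelian, and Corollary \ref{cor} yields $[\mathcal{M}(N_g),\mathcal{M}(N_g)]\subseteq\ker\pi=\langle\langle T_cT_d\rangle\rangle$.

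For the reverse inclusion it suffices to exhibit $T_cT_d$ as a single commutator. The key claim is that $T_c$ is conjugate to $T_d^{-1}$ in $\mathcal{M}(N_g)$; granting this, writing $T_c=gT_d^{-1}g^{-1}$ gives $T_cT_d=[g,T_d^{-1}]$. To obtain the conjugacy I would combine two ingredients. First, by the change of coordinates principle there is $h\in\mathcal{M}(N_g)$ with $h(c)=d$: an Euler characteristic and orientability computation shows the regular neighborhood of $c\cup d$ is the one-holed torus with complement $N_{g-2,1}$ in $N_g$, so the cut surface along $c$ (and along $d$) is $N_{g-2,2}$, and change of coordinates applies. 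This yields $hT_ch^{-1}=T_d^{\epsilon}$ for some $\epsilon\in\{\pm1\}$. Second, $T_d$ is conjugate to $T_d^{-1}$: the complement $N_{g-2,2}$ of $d$ is non-orientable (using $g\geq 5$), so a regular neighborhood of $d$ can be pushed around an orientation-reversing loop in the complement, producing a mapping class $k$ that fixes $d$ setwise and swaps its two sides. Hence $kT_dk^{-1}=T_d^{-1}$, and replacing $h$ by $kh$ when $\epsilon=+1$ completes the conjugacy.

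The main subtlety is the construction of the side-swap $k$, which relies on the non-orientability of the complement of $d$ and is a standard feature of Dehn twists about two-sided curves on non-orientable surfaces but should be justified carefully. Everything else is a direct application of Lemma \ref{commute} and Corollary \ref{cor}.
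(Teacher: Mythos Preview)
Your proposal is correct and follows essentially the same route as the paper's proof: one inclusion via the quotient map and Corollary~\ref{cor}, the other by exhibiting $T_cT_d$ as a commutator using that $T_c$ is conjugate to $T_d^{-1}$. The only difference is cosmetic: the paper simply cites \cite[Lemma~2.4]{MK} for the conjugacy $T_d\sim T_d^{-1}$, whereas you sketch the side-swapping homeomorphism directly; and the paper observes that $N_g\setminus c$ and $N_g\setminus d$ are connected and non-orientable without the one-holed-torus digression (indeed, $i(c,d)=1$ forces the complement of each curve to contain the non-orientable complement of the one-holed torus, so this is automatic here and does not require the full strength of $g\ge 5$).
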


\begin{proof}

Since $N\setminus c$ and $N\setminus d$ are both connected and non-orientable, there exists an element $h\in \mathcal{M}(N_{g})$ such that $h(c)=d$. Then $h T_c h^{-1}=T_d^{\pm 1}$. Because $T_d$ is conjugate with $T_d^{-1}$ in $\mathcal{M}(N_{g})$ (Lemma 2.4 in \cite{MK}), we can take $h$ to be such that $h T_c h^{-1}=T_d^{-1}$. It follows that the element $T_{c}T_{d}=T_c h T_c^{-1} h^{-1}$ lies in the commutator subgroup. Since the commutator subgroup is a normal subgroup, it contains the normal closure of $T_{c}T_{d}$.

Now let $H$ denote the normal closure of $T_{c}T_{d}$ in $\mathcal{M}(N_{g})$ and let $p:\mathcal{M}(N_{g})\rightarrow\mathcal{M}(N_{g})/H$ be the canonical projection map. It is easy to see that $p(T_{c})=p(T_{d})^{-1}$; in particular, $p(T_{c})$ and $p(T_{d})$ commute. By Corollary \ref{cor} the kernel of $p$ contains the commutator subgroup of $\mathcal{M}(N_{g})$. Since $\ker p=H$, the proof is complete.
\end{proof}

\begin{defi}
Let $c$, $d$ be a pair of non-separating two-sided simple closed curves on a surface $N_{g}$. We will say that $c$ and $d$ form
\begin{itemize}
\item a type 1 standard pair if $i(c,d)=1$;
\item a type 2 standard pair if $i(c,d)=0$ and the complement $N_{g}\setminus(c\cup d)$ is connected and non-orientable.
\end{itemize}
\end{defi}

\begin{lem}\label{L2.2}
For $g\geq5$, let $f\in\mathcal{M}(N_{g})$. Suppose that there is a curve $c$ in $N_{g}$ such that $(c,f(c))$ form a standard pair of type 1 or 2. Then the normal closure of $f$ in $\mathcal{M}(N_{g})$ contains the commutator subgroup of $\mathcal{M}(N_{g})$.
\end{lem}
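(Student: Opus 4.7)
The plan is to exhibit, inside the normal closure $N$ of $f$, an element that forces two Dehn twists about curves intersecting once to have commuting images in the quotient $\mathcal{M}(N_g)/N$; Corollary~\ref{cor} will then finish the argument. Set $d=f(c)$ and choose the orientation of a tubular neighborhood of $d$ so that $fT_cf^{-1}=T_d$. Since $f\in N$ and $N$ is a normal subgroup, $T_cfT_c^{-1}\in N$, and therefore
\[
T_cfT_c^{-1}\cdot f^{-1}=T_cT_d^{-1}
\]
lies in $N$. Writing $p\colon\mathcal{M}(N_g)\to\mathcal{M}(N_g)/N$ for the quotient projection, this gives $p(T_c)=p(T_d)$.

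If $(c,d)$ is a type~1 pair, we are already done: $p(T_c)$ and $p(T_d)$ commute, and since $i(c,d)=1$, Corollary~\ref{cor} yields $[\mathcal{M}(N_g),\mathcal{M}(N_g)]\subseteq N$.

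The type~2 case is more subtle, since $T_c$ and $T_d$ already commute in $\mathcal{M}(N_g)$, so $p(T_c)=p(T_d)$ gives no information about a pair meeting in one point. To manufacture such a relation I would telescope through a chain of type~2 pairs. Using $g\ge 5$, choose two-sided non-separating simple closed curves $a,b,e$ in $N_g$ with $i(a,e)=1$, with $b$ disjoint from $a\cup e$, and with both $(a,b)$ and $(b,e)$ type~2 standard pairs; this is achieved by placing $a$ and $e$ in a torus-with-one-boundary subsurface of $N_g$ and taking $b$ to be a two-sided non-separating curve in the non-orientable complement. By the change of coordinates principle, every type~2 standard pair lies in the $\mathcal{M}(N_g)$-orbit of $(c,d)$, so conjugates of $T_cT_d^{-1}$ yield elements $T_a^{\eta_1}T_b^{-\eta_2}$ and $T_b^{\eta_3}T_e^{-\eta_4}$ in $N$ for some signs $\eta_i\in\{\pm 1\}$. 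These give $p(T_a)^{\eta_1}=p(T_b)^{\eta_2}$ and $p(T_b)^{\eta_3}=p(T_e)^{\eta_4}$; combining them shows $p(T_a)$ is an integer power of $p(T_e)$ and so commutes with it. Since $i(a,e)=1$, Corollary~\ref{cor} finishes the type~2 case.

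The hardest point will be the sign bookkeeping, since $gT_cg^{-1}=T_{g(c)}^{\pm 1}$ only up to the orientation convention for the neighborhood of $g(c)$, leaving the signs $\eta_i$ outside our direct control. This is harmless, however, because any combination of the two relations expresses $p(T_a)$ as a power of $p(T_e)$, which is all Corollary~\ref{cor} requires. Verifying existence of the triple $(a,b,e)$ and checking that $(a,b),(b,e)$ really are type~2 pairs is routine change-of-coordinates bookkeeping given $g\ge 5$.
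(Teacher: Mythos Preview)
Your argument is correct and follows essentially the same strategy as the paper: in both cases one shows that a product (or ratio) of two twists lies in the normal closure, then for type~2 uses transitivity of $\mathcal{M}(N_g)$ on type~2 pairs to telescope through an intermediate curve and reach a pair with intersection number~1. The only cosmetic differences are that the paper reuses $(c,f(c))$ as one of the two type~2 pairs and multiplies explicitly to obtain $T_cT_d\in N$ before invoking Lemma~\ref{L2.1}, whereas you introduce a fresh triple $(a,b,e)$ and work in the quotient via Corollary~\ref{cor} directly; your handling of the sign ambiguities is also correct, since any combination yields $p(T_a)=p(T_e)^{\pm1}$.
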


\begin{proof}
Let $T_c$ and $T_{f(c)}$ be such that $T_{f(c)}=fT_c^{-1}f^{-1}$.

Suppose first that $c$ and $f(c)$ form a type 1 standard pair.
Since the commutator $[T_{c},f]$ is equal to the product of $T_{c}fT_{c}^{-1}$ and $f^{-1}$, it lies in the normal closure of $f$. On the other hand, $T_{c}fT_{c}^{-1}f^{-1}$ is equal to $T_{c}T_{f(c)}$. Since $i(c,f(c))=1$, the lemma follows by Lemma \ref{L2.1}.

Now suppose that $c$ and $f(c)$ form a type 2 standard pair. Then we can find a two-sided, non-separating curve $d$ such that $i(c,d)=1$ and $(d,f(c))$ form a type 2 standard pair. As before, the commutator $[T_{c},f]$ is equal to $T_{c}T_{f(c)}^{-1}$ and lies in the normal closure of $f$. The group $\mathcal{M}(N_{g})$ acts transitively on type 2 standard pairs of curves, therefore there exists a mapping class $h\in\mathcal{M}(N_{g})$ such that $h(c)=f(c)$ and $h(f(c))=d$. We choose $T_d$ be to such that $h[T_{c},f]h^{-1}$ is equal to either $T_{f(c)}^{-1}T_{d}$ or $T_{f(c)}T_{d}^{-1}$. In the former case we have $T_cT_d=\left(T_{c}T_{f(c)}\right)\left(T_{f(c)}^{-1}T_{d}\right)$, and in the latter $T_cT_d=\left(T_{c}T_{f(c)}\right)\left(T_{f(c)}T_{d}^{-1}\right)^{-1}$ (note that because $d$ and $f(c)$ are disjoint, $T_d$ and $T_{f(c)}$ commute). In both cases $T_cT_d$ lies in the normal closure of $f$. The lemma follows by Lemma \ref{L2.1}.
This completes the proof.
\end{proof}
\begin{rem}
	It is easily seen that for any $f\in\mathcal{M}(N_g)$ and any $n\in\mathbb{N}$ the normal closure of $f^n$ is contained in the normal closure of $f$.
\end{rem}

\section{NEC groups}\label{sec:nec}

\subsection{NEC group and its fundamental polygon}\label{polynot}

Non-euclidean crystallographic (NEC) groups are discrete and cocompact subgroups of the group of isometries of the hyperbolic plane, $\mathrm{Isom}(\mathbb{H}^2)$. They provide a natural tool for studying finite group actions on surfaces; see for example \cite{JPAA,GMJ,RACSAM}.

Every action of a finite group $G$ on a surface $N_g$ can be obtained by means of a pair of NEC groups $\Gamma$ and $\Lambda$, with $\Gamma$ -- a normal torsion-free subgroup of $\Lambda$ such that $N_g$ is homeomorphic to $\mathbb{H}^2/\Gamma$ and $G$ is isomorphic to $\Lambda/\Gamma$. Equivalently, there is an epimorphism $\theta:\Lambda\rightarrow G$ with $\Gamma=\ker\theta\cong\pi_1(N_g)$.

The signature of an NEC group $\Lambda$ is a collection of non-negative integers and symbols. In our case, that is for cyclic $G$, the signature has the form
\begin{equation*}
	\left( h; \pm ; [m_1,...,m_r] ; \{()^k\} \right)
	\end{equation*}
where
	\begin{enumerate}
		
		\item the sign $\pm$ is ''$+$'' if $\mathbb{H}^2/\Lambda$ is orientable and ''$-$'' otherwise;
		
		\item the integer $h\geq0$ denotes the genus of $\mathbb{H}^2/\Lambda$;
		
		\item the ordered set of integers $m_1,...,m_r (m_i\geq 2)$, called the \emph{proper periods} of the signature, corresponds to cone points on the orbifold $\mathbb{H}^2/\Lambda$;
		
		\item $k$ empty \emph{period-cycles} $(),...,()$ correspond to boundary components of the orbifold $\mathbb{H}^2/\Lambda$.
		
	\end{enumerate}
\begin{rem}
A general NEC signature can also have nonempty period cycles \cite{Buj}.
\end{rem}	
	
	If the set of periods or the set of period cycles is empty, we write the brackets with no symbols between them. For example, the signature $\left(g; -; [];\{\} \right)$ has no proper periods and no period cycles; it is the signature of the NEC group isomorphic to $\pi_1(N_g)$. If we know all $m_i$ to have the same value $p$, we will write $[(p)^r]$.

The quotient orbifold can be reconstructed from the associated NEC group by identifying the right edges of a marked polygon which is the fundamental region $P$ of $\Lambda$, as detailed in \cite{Mac}. The marked polygon is a plane polygon in which certain edges are related by homeomorphisms; the edges are identified if one is the image of the other under an element of $\Lambda$. If the first edge has vertices, in order as we read the labels anticlockwise, $P$ and $Q$, and the other has vertices $R$ and $S$, the identifying homeomorphism can map $P$ on $S$ and $Q$ on $R$, pairing the edges orientably, or map $P$ on $R$ and $Q$ on $S$, pairing the edges nonorientably. Two sides paired orientably will be indicated by the same letter and a prime, for example $\xi,\xi'$; two sides paired nonorientably will be written using the same letter and an asterisk, for example $\alpha, \alpha^*$. If we mark all the egdes of the polygon accordingly and then write them in the order in which they appear around the polygon anticlockwise, we obtain the surface symbol of the polygon \cite{Mac}. The marked polygon of the signature
\begin{equation*}
\left( h; + ; [m_1,...,m_r] ; \{ ()^k \} \right)
\end{equation*}
has the surface symbol
\begin{equation*}
\xi_1\xi_1'...\xi_r\xi_r'\epsilon_1\gamma_{1}\epsilon_1'...\epsilon_k\gamma_{k}\epsilon_k'\alpha_1\beta_1'\alpha_1'\beta_1...\alpha_h\beta_h'\alpha_h'\beta_h
\end{equation*}
while the marked polygon of the signature
\begin{equation*}
\left( h; - ; [m_1,...,m_r] ; \{ ()^k \} \right)
\end{equation*}
has the surface symbol
\begin{equation*}
\xi_1\xi_1'...\xi_r\xi_r'\epsilon_1\gamma_{1}\epsilon_1'...\epsilon_k\gamma_{k}\epsilon_k'\alpha_1\alpha_1^*...\alpha_h\alpha_h^*.
\end{equation*}
The signature gives us a presentation of the NEC group, as shown by Wilkie in \cite{Wilkie}. The presentation is as follows:
\begin{description}
\item[Generators]
\begin{enumerate}
	\item $x_1,...,x_r$ (elliptic elements);
	\item $c_{1},...,c_{k}$ (hyperbolic reflections);
	\item $e_1,...,e_k$ (hyperbolic elements except for cases $h=0$ and $r=k=1$, when they are elliptic elements);
	\item \begin{enumerate}
		\item $a_1,b_1,...,a_h,b_h$ (hyperbolic elements) if the sign is $+$;
		\item $d_1,...,d_h$ (glide reflections) if the sign is $-$.
	\end{enumerate}
\end{enumerate}

\item[Relations]

\begin{enumerate}
	\item $x_i^{m_i}=1$, $i=1,...,r$;
	\item $c_{j}^2=1$, $j=1,...,k$;
	\item $c_{j}=e_j^{-1}c_{j}e_j$ $j=1,...,k$;
	\item The long relation: \begin{enumerate}
		\item $x_1...x_re_1...e_ka_1b_1a_1^{-1}b_1^{-1}...a_hb_ha_h^{-1}b_h^{-1}=1$ if the sign is $+$;
		\item $x_1...x_re_1...e_kd_1^2...d_h^2=1$ if the sign is $-$.
	\end{enumerate}
	
\end{enumerate}
\end{description}

As mentioned above, the marked polygon $P$ is also the fundamental region of the NEC group $\Lambda$ associated with it. The generators of $\Lambda$ map the edges of the marked polygon in the following way:
\begin{enumerate}
	\item $x_i(\xi_i')=\xi_i$, $i=1,...,r$;
	\item $e_j(\epsilon_j')=\epsilon_j$, $j=1,...,k$;
	\item $c_j$, $j=1,...,k$ is a reflection along the axis containing $\gamma_j$;
	\item \begin{enumerate}
		\item $a_l(\alpha_l')=\alpha_l$ and $b_l(\beta_l')=\beta_l$, $l=1,...,h$ if the sign is $+$;
		\item $d_l(\alpha_l^*)=\alpha_l$, $l=1,...,h$ if the sign is $-$.
	\end{enumerate}
\end{enumerate}
Then $\mathbb{H}^2/\Lambda \cong P/\sim$, where $\sim$ refers to identification of edges paired by the above homeomorphisms. Only generators $c_{1},...,c_{k}$ and $d_1,...,d_h$ are orientation-reversing, the others are all orientation-preserving.
\begin{lem}\label{nonorient}\cite{Buj,RACSAM}
	Suppose that $\Lambda$ is an NEC group with signature \[\left( h; \pm ; [m_1,...,m_r] ; \{()^k\}\right).\] A group homomorphism $\theta:\Lambda\rightarrow G$ defines an action of G on a non-orientable surface if and only if
	\begin{enumerate}
		\item $\theta(x_i)$ has order $m_i$ for $1\leq i\leq r$,
		\item $\theta(c_j)$ has order 2 for $1\leq j\leq k$, and
		\item $\theta(\Lambda^+)=G$, where $\Lambda^+$ is the subgroup of $\Lambda$ consisting of orientation-preserving elements.
	\end{enumerate}
\end{lem}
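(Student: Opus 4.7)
My plan is to unpack what it means for $\theta$ to ``define an action of $G$ on a non-orientable surface.'' Writing $\Gamma = \ker\theta$, this says exactly that $\Gamma$ is a torsion-free NEC group whose orbit space $\mathbb{H}^2/\Gamma$ is a non-orientable surface, equivalently that $\Gamma \cong \pi_1(N_g)$ for some $g$. Since $G = \Lambda/\Gamma$, the lemma then reduces to separately characterising (i) the torsion-freeness of $\Gamma$ and (ii) the presence of orientation-reversing elements in $\Gamma$.

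For (i), I would invoke the standard classification of torsion in an NEC group $\Lambda$ whose signature has only empty period cycles: up to conjugacy in $\Lambda$, every non-trivial element of finite order is either a power $x_i^j$ of an elliptic generator (with $0 < j < m_i$) or one of the canonical reflections $c_j$. Consequently $\Gamma$ is torsion-free precisely when $\theta$ is injective on each $\langle x_i \rangle$ and on each $\langle c_j \rangle$, which are exactly conditions (1) and (2).

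For (ii), the point is that $\mathbb{H}^2/\Gamma$ is non-orientable if and only if $\Gamma$ contains some orientation-reversing isometry, equivalently $\Gamma \not\subseteq \Lambda^+$. A short index-$2$ argument then matches this with condition (3): if $\Gamma \subseteq \Lambda^+$ then $\theta(\Lambda^+) = \Lambda^+/\Gamma$ has index $2$ in $G$, so $\theta(\Lambda^+) \neq G$; conversely, if $\Gamma$ contains some orientation-reversing $k$, then every orientation-reversing $g \in \Lambda$ satisfies $\theta(g) = \theta(gk) \in \theta(\Lambda^+)$, forcing $\theta(\Lambda^+) = G$.

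Combining these two equivalences gives the lemma. The only non-trivial input is the classification of conjugacy classes of torsion in NEC groups with empty period cycles, which I expect to be the main obstacle if one wanted a completely self-contained argument; for the signatures considered here, however, it follows directly from Wilkie's presentation recalled in Section \ref{polynot} (every elliptic or reflection subgroup of $\Lambda$ is conjugate into one of the $\langle x_i \rangle$ or $\langle c_j \rangle$), so no additional work is needed.
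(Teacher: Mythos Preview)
Your proposal is correct and follows exactly the approach the paper indicates: the paper does not prove this lemma but cites \cite{Buj,RACSAM} and then remarks that conditions (1)--(2) guarantee $\Gamma=\ker\theta$ is torsion-free while condition (3) ensures $\Gamma$ contains orientation-reversing elements, which is precisely the decomposition you carry out in detail. Your index-$2$ argument for the equivalence in (ii) tacitly uses $[\Lambda:\Lambda^+]=2$, i.e.\ that $\Lambda$ itself contains orientation-reversing elements; this is harmless here since otherwise no quotient $\mathbb{H}^2/\Gamma$ with $\Gamma\le\Lambda$ could be non-orientable anyway.
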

Notice that the first two conditions guarantee that $\Gamma=\ker \theta$ is torsion-free and the third condition ensures that $\Gamma$ contains orientation-reversing elements and is therefore the fundamental group of a non-orientable surface.

If $\Lambda/\Gamma$ has order $n$ and $\Gamma\cong\pi_1(N_g)$, the Hurwitz-Riemann formula takes the following form
\begin{equation*}
g-2=n\left(\epsilon h + k - 2+\sum_{i=1}^{r}\left(1-\frac{1}{m_i}\right)\right)
\end{equation*}
where $\epsilon$ is equal to $1$ if $N_g/\left\langle \phi \right\rangle$ is non-orientable and $2$ otherwise.

\subsection{Topological equivalence}\label{topeq}

Suppose $\theta_i:\Lambda\rightarrow G$ are two epimorphisms with $\Gamma=\ker\theta_i \cong\pi_1(N_g)$ for $i=1,2$. We say that $\theta_1$ and $\theta_2$ are topologically conjugate if and only if the corresponding $G$-actions are conjugate by a homeomorphism of $N_g$. Equivalently, $\theta_1$ and $\theta_2$ are topologically conjugate if and only if there exist automorphisms $\psi\in \mathrm{Aut}(\Lambda)$ and $\chi\in\mathrm{Aut}(G)$ such that $\chi\circ \theta_1=\theta_2\circ \psi$; see Definition 2.2 in \cite{RACSAM} and the preceeding remark, or Proposition 2.2 in \cite{JPAA}.

We list the automorphisms of $\Gamma$ which will be used in the proof of Theorem \ref{noninv}; these and more are given in \cite{GMJ} (see also references therein). If the sign is "$+$", we are going to use the following automorphisms of $\Gamma$:

$\sigma$ defined by $\sigma(x_r)=Ea_1^{-1}E^{-1}x_rEa_1E^{-1}$, $\sigma(a_1)=[a_1^{-1},E^{-1}x_r^{-1}E]a_1$, $\sigma(b_1)=b_1a_1^{-1}
E^{-1}x_rEa_1$, where $E=e_1...e_k$, and the identity on the remaining generators.

$\pi$ defined by $\pi(e_k)=a_1^{-1}e_ka_1$, $\pi(c_k)=a_1^{-1}c_ka_1$, $\pi(a_1)=[a_1^{-1},e_k^{-1}]a_1$, $\pi(b_1)=b_1a_1^{-1}e_ka_1$ and the identity on the remaining generators.

$\omega$ defined by $\omega(a_1)=a_1$, $\omega(b_1)=b_1a_1$ and the identity on the remaining generators.

If the sign is "$-$", we are going to use the following automorphisms of $\Gamma$:

$\gamma$ defined by $\gamma(d_1)=E^{-1}x_rEd_1$, $\gamma(x_r)=x_rEd_1E^{-1}x_r^{-1}Ed_1^{-1}E^{-1}x_r^{-1}$, where $E=e_1...e_k$, and the identity on the remaining generators.

$\epsilon$ defined by $\epsilon(d_1)=e_kd_1$, $\epsilon(e_k)=e_kd_1e_k^{-1}d_1^{-1}e_k^{-1}$, $\epsilon(c_k)=e_kd_1c_kd_1^{-1}e_k^{-1}$ and the identity on the remaining generators.

Regardless of sign, we are also going to use the following automorphisms of $\Gamma$:

$\rho_i$ defined by $\rho_i(x_i)=x_ix_{i+1}x_i^{-1}$, $\rho_i(x_{i+1})=x_i$ and the identity on the remaining generators.

$\lambda_j$ defined by $\lambda_j(e_j)=e_je_{j+1}e_j^{-1}$, $\lambda_j(e_{j+1})=e_j$, $\lambda_j(c_j)=e_jc_{j+1}e_j^{-1}$ and the identity on the remaining generators.

In \cite{GMJ}, the following two theorems are proven that provide criteria for topological equivalence of $\mathbb{Z}_p$-actions:
\begin{thm}\label{thm1}
	Suppose that $p$ is an odd prime, $\Lambda$ is an $NEC$ group of signature\ $(h;-;[(p)^r];\{-\})$ and $\theta_i:\Lambda\rightarrow \mathbb{Z}_p$ for $i=1,2$ are two epimorphisms with $\ker\theta_i$ isomorphic to the fundamental group of a non-orientable surface. Then $\theta_1$ and $\theta_2$ are topologically conjugate if and only if $(\theta_2(x_1),...,\theta_2(x_r))$ is a permutation of $(\epsilon_1a\theta_1(x_1),...,\epsilon_ra\theta_1(x_r))$ for some $a\in\{1,...,p-1\}$ and $\epsilon_j\in\{1,-1\}$, $j=1,...,r$.
\end{thm}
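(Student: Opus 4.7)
The plan is to prove both directions by combining an explicit construction of automorphisms of $\Lambda$ with a classification of how arbitrary automorphisms must act on the elliptic generators.

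For sufficiency, suppose there exist $a\in\{1,\ldots,p-1\}$, a permutation $\tau\in S_r$ and signs $\epsilon_j\in\{\pm 1\}$ with $\theta_2(x_{\tau(j)}) = \epsilon_j a\theta_1(x_j)$ for every $j$. I would take $\chi\in\mathrm{Aut}(\mathbb{Z}_p)$ to be multiplication by $a$ and construct $\psi\in\mathrm{Aut}(\Lambda)$ as a composition of the automorphisms listed in Subsection \ref{topeq}. The $\rho_i$'s realise transpositions of adjacent elliptic generators and so generate the full symmetric action on the $x_i$'s, allowing us to realise $\tau$; meanwhile $\gamma$ negates $\theta(x_r)$ and fixes the remaining $\theta(x_i)$'s, so conjugating $\gamma$ by appropriate $\rho_i$'s yields an automorphism inverting any chosen $x_i$. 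Iterating through the indices $j$ with $\epsilon_j=-1$ and composing with the permutation part produces a $\psi$ matching $\chi\circ\theta_1 = \theta_2\circ\psi$ on all elliptic generators; a further correction by automorphisms that alter the $d_l$'s while fixing the $x_i$'s then propagates the identity to the remaining generators, using the long relation to bookkeep the sum $\sum\theta(d_l)$.

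For necessity, assume $\chi\circ\theta_1 = \theta_2\circ\psi$ for some $\psi\in\mathrm{Aut}(\Lambda)$ and $\chi\in\mathrm{Aut}(\mathbb{Z}_p)$, and let $a\in\mathbb{Z}_p^*$ satisfy $\chi(z)=az$. The key structural fact is that the maximal finite cyclic subgroups of $\Lambda$ fall into exactly $r$ distinct $\Lambda$-conjugacy classes, represented by $\langle x_1\rangle,\ldots,\langle x_r\rangle$ --- one for each cone point of the orbifold $\mathbb{H}^2/\Lambda$. Hence $\psi$ induces a permutation $\mu\in S_r$ of these classes, so
\begin{equation*}
\psi(x_i) = g_i\, x_{\mu(i)}^{k_i}\, g_i^{-1}
\end{equation*}
for some $g_i\in\Lambda$ and $k_i\in\mathbb{Z}_p^*$. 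Applying $\theta_2$ and using the abelianness of $\mathbb{Z}_p$ yields $a\theta_1(x_i) = k_i\theta_2(x_{\mu(i)})$, hence $\theta_2(x_{\mu(i)}) = k_i^{-1}a\theta_1(x_i)$.

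The main obstacle is to show $k_i\in\{\pm 1\}$. I plan to argue this via the classical realisation of $\mathrm{Out}(\Lambda)$ by self-homeomorphisms of the orbifold $\mathbb{H}^2/\Lambda$: such a homeomorphism sends each cone point to a cone point of the same order, and on a small disc around the cone point it is either orientation-preserving, hence the identity on the cyclic isotropy modulo conjugation, or orientation-reversing, hence inversion. Consequently the $\Lambda$-conjugacy class of $\psi(x_i)$ equals that of $x_{\mu(i)}^{\pm 1}$, forcing $k_i\in\{\pm 1\}$. Setting $\epsilon_i := k_i^{-1}$ yields $\theta_2(x_{\mu(i)}) = \epsilon_i a\theta_1(x_i)$ for every $i$, which is exactly the statement that $(\theta_2(x_1),\ldots,\theta_2(x_r))$ is a permutation of $(\epsilon_1 a\theta_1(x_1),\ldots,\epsilon_r a\theta_1(x_r))$, completing the proof.
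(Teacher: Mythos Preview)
The paper does not actually prove this theorem; it is quoted from \cite{GMJ}, so there is no in-paper argument to compare against directly. That said, your outline is broadly along the lines of the original proof in \cite{GMJ}, with two points that deserve tightening.

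In the sufficiency direction, the use of the $\rho_i$'s to permute the elliptic generators and of $\gamma$ (with $E=1$ since $k=0$) to invert the image of a chosen $x_i$ is exactly right. What you pass over too quickly is the final step of matching the values on the glide reflections $d_1,\dots,d_h$: once the $x_i$'s agree you still need explicit automorphisms of $\Lambda$ that move the $\theta$-values on the $d_l$'s freely while fixing every $\theta(x_i)$. Such automorphisms exist (the paper itself invokes them when it says in Case~2 of the proof of Theorem~\ref{noninv} that one may assume $d_l\in\Gamma$ for $l>1$, referring to the proof in \cite{GMJ}), but you should write them down rather than appeal to ``bookkeeping via the long relation''; note that since $p$ is odd the relation $\sum\theta(x_i)+2\sum\theta(d_l)=0$ determines only $\sum\theta(d_l)$, not the individual values.

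In the necessity direction, the reduction to $\psi(x_i)=g_i x_{\mu(i)}^{k_i} g_i^{-1}$ via the classification of maximal finite cyclic subgroups is correct, and the conclusion $k_i\in\{\pm1\}$ is the heart of the matter. Your geometric argument is valid but rests on a substantial fact you should name explicitly: the Dehn--Nielsen--Baer theorem for NEC groups (every outer automorphism of $\Lambda$ is induced by a self-homeomorphism of the orbifold $\mathbb{H}^2/\Lambda$). Granting that, the local behaviour at a cone point indeed forces the induced map on the cyclic isotropy to be $\pm\mathrm{id}$, since a self-homeomorphism of a disc fixing the centre either preserves or reverses local orientation. Without that citation the step ``hence $k_i\in\{\pm1\}$'' is an assertion, not an argument.
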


\begin{thm}\label{thm2}
	Suppose that $\Lambda$ is an $NEC$ group of signature\linebreak $(h;-;[(2)^r];\{()^k\})$ and $\theta_i:\Lambda\rightarrow \mathbb{Z}_2$ for $i=1,2$ are two epimorphisms with $\ker\theta_i$ isomorphic to the fundamental group of a non-orientable surface. Let
	\begin{equation*}
	k_-^{(i)}=\#\{j\in\{1,...,k\}|\theta_i(e_j)=1\}
	\end{equation*}for i=1,2.
	Then $\theta_1$ and $\theta_2$ are topologically conjugate if and only if
	\begin{enumerate}
		\item $k_-^{(1)}=k_-^{(2)}$, and if $r=k_-^{(1)}=k_-^{(2)}=0$ and the sign is '$-$' also
		\item $\theta_1(d_1...d_g)=\theta_2(d_1...d_g)$ and
		\item $\theta_1(d_1)=...=\theta_1(d_g)=0$ if and only if $\theta_2(d_1)=...=\theta_2(d_g)=0$.
	\end{enumerate}
\end{thm}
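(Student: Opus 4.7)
The plan is to prove each direction of the biconditional separately. Since $\mathrm{Aut}(\mathbb{Z}_2)$ is trivial, topological conjugacy of $\theta_1$ and $\theta_2$ reduces to finding $\psi \in \mathrm{Aut}(\Lambda)$ with $\theta_1 = \theta_2 \circ \psi$. By Lemma \ref{nonorient}, we must have $\theta_i(x_l) = \theta_i(c_j) = 1$ for every $l, j$, so the only data that can differ between $\theta_1$ and $\theta_2$ is the tuple $(\theta_i(e_1), \ldots, \theta_i(e_k), \theta_i(d_1), \ldots, \theta_i(d_h))$.

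For the forward implication, $k_-^{(i)}$ equals the number of one-sided ovals in the fixed point set of the involution on $N_g$ corresponding to $\theta_i$, which is a topological invariant of the action and hence preserved under topological conjugacy, yielding condition (1). In the exceptional case $r = k_-^{(1)} = k_-^{(2)} = 0$, I would identify the sum $\theta_i(d_1 \cdots d_h)$ with a characteristic class of the orientation double cover of the quotient orbifold $N_g/\langle \phi_i \rangle$, and the ``all zero'' dichotomy in (3) with whether the subgroup of $\Lambda$ generated by $d_1, \ldots, d_h$ lies in $\ker \theta_i$; both are topological invariants of the action.

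For the reverse implication, I would construct $\psi$ as a composition of the automorphisms of $\Lambda$ listed in Section \ref{topeq}. First, the $\lambda_j$'s permute the period cycles, and using $k_-^{(1)} = k_-^{(2)}$ I can arrange $\theta_2(e_j) = \theta_1(e_j)$ for every $j$. Then, if $r \geq 1$, the automorphism $\gamma$ modifies $\theta(d_1)$ by adding $\theta(x_r) = 1 \in \mathbb{Z}_2$ while fixing the other generator images; combined with automorphisms permuting $d_1, \ldots, d_h$, this gives full control over the values $\theta(d_l)$. Similarly, if $k_-^{(1)} \geq 1$, the automorphism $\epsilon$ provides the same freedom via $\theta(e_k) = 1$. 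In either case, no further conditions beyond (1) are needed.

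The main obstacle is the case $r = k_-^{(1)} = k_-^{(2)} = 0$, where neither $\gamma$ nor $\epsilon$ can flip any $\theta(d_l)$, so the orbit of $(\theta(d_1), \ldots, \theta(d_h)) \in \mathbb{Z}_2^h$ under the restricted collection of available automorphisms shrinks. The crux of the argument is to verify that this $\mathrm{Aut}(\Lambda)$-action on $\mathbb{Z}_2^h$ has orbits classified precisely by the parity $\sum_l \theta(d_l) \bmod 2$ together with the binary indicator of whether the tuple is the zero tuple---exactly the invariants in conditions (2) and (3). This will require enumerating automorphisms of $\Lambda$ beyond the ones listed above and checking that any such automorphism descends to a symmetry of $\mathbb{Z}_2^h$ preserving both of these invariants.
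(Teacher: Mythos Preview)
The paper does not contain a proof of this theorem. It is quoted verbatim from \cite{GMJ} (see the sentence immediately preceding Theorems~\ref{thm1} and~\ref{thm2}: ``In \cite{GMJ}, the following two theorems are proven\ldots''), and the paper only \emph{uses} it as a black box in Sections~\ref{sec:proofnoninv} and~\ref{sec:class}. There is therefore no proof in this paper for your proposal to be compared against.

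That said, your outline is broadly the standard one and matches the strategy of \cite{GMJ}: reduce to an $\mathrm{Aut}(\Lambda)$-orbit problem on the data vector, use the listed elementary automorphisms ($\lambda_j$, $\gamma$, $\epsilon$, and the permutations of the $d_l$) to normalise, and then in the exceptional case $r=k_-=0$ show that the residual orbit invariants are exactly the parity $\sum_l \theta(d_l)$ and the ``all-zero'' flag. Two small remarks. First, the statement in the paper writes $d_1\cdots d_g$; this is a typo for $d_1\cdots d_h$, which you have silently corrected. Second, your final paragraph understates what is needed for necessity in the exceptional case: it is not enough to check that the \emph{listed} automorphisms preserve (2) and (3); you must show that \emph{every} automorphism of $\Lambda$ does, which in \cite{GMJ} is handled by identifying these quantities with genuine topological invariants of the covering (Dugger's $\epsilon$- and $DD$-invariants, cf.\ Section~\ref{sec:class}) rather than by an exhaustive enumeration of $\mathrm{Aut}(\Lambda)$.
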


Let $\gamma_j$ be the axis of the reflection $c_j$ corresponding to a boundary component of $\mathbb{H}^2/\Lambda$. Notice that $\gamma_j$ projects to a two-sided circle on $N_g\cong\mathbb{H}^2/\Gamma$ if and only if $e_j\in\Gamma$. In other words, the invariant $k_-^{(i)}$ from Theorem \ref{thm2} is the number of one-sided ovals.

\section{Periodic elements of order greater than 2}\label{sec:proofnoninv}

\begin{proof}[Proof of Theorem \ref{noninv}]

Let $f$ be a periodic element of $\mathcal{M}(N_g)$ of order $\#f>2$ and $\phi$ its standard representative. We will  show that there exists a curve $c$ on $N_g$ such that $(c,f(c))$ is a standard pair. Theorem \ref{noninv} will then follow by Lemma \ref{L2.2}.

\textbf{Case 1:} the action of $\left\langle\phi \right\rangle$ is free. By raising $f$ to an appropriate power, we can assume that $\#f$ is equal to a prime $p$. Since the action of $\left\langle \phi \right\rangle$ is free, it is a covering space action.

Let $p$ be an odd prime. By Theorem \ref{thm1} the covering map is uniquely determined up to powers and conjugacy by the genus $h$ of $N_g/\left\langle \phi \right\rangle$. Thus we can assume that $\phi$ is a rotation by the angle $2\pi/p$ of the surface $N_g$, as in Figure \ref{figfreeh}. If $h\geq 3$, we can find a curve $c$ such that $c$ and $f(c)$ form a type 1 standard pair.
\begin{figure}[!htbp]\begin{center}
				\includegraphics[scale=.7]{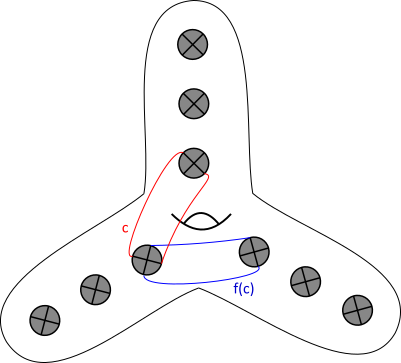}
				\caption{Type 1 standard pair for $h\geq 3$ and $p=3$.}
				\label{figfreeh}\end{center}
		\end{figure}		
		An easy argument from the Hurwitz-Riemann formula shows that a case where $h<3$ does not occur.
		
Now let $p=2$. By Theorem 4.1 of \cite{Dug}, there are exactly two distinct conjugacy classes of free actions of the cyclic group of order 2 on a non-orientable surface of even genus equal at least 4. For $g=2s$, $s\geq 2$ the two actions can be represented as the antipodism of a sphere with $2(s-1)$ crosscaps forming $s-1$ antipodic pairs and the antipodism of a torus with $2(s-2)$ crosscaps forming $s-2$ antipodal pairs; we will denote them $f_{01}$ and $f_{02}$, respectively.
		\begin{figure}[!htbp]\begin{center}
				\includegraphics[scale=.4]{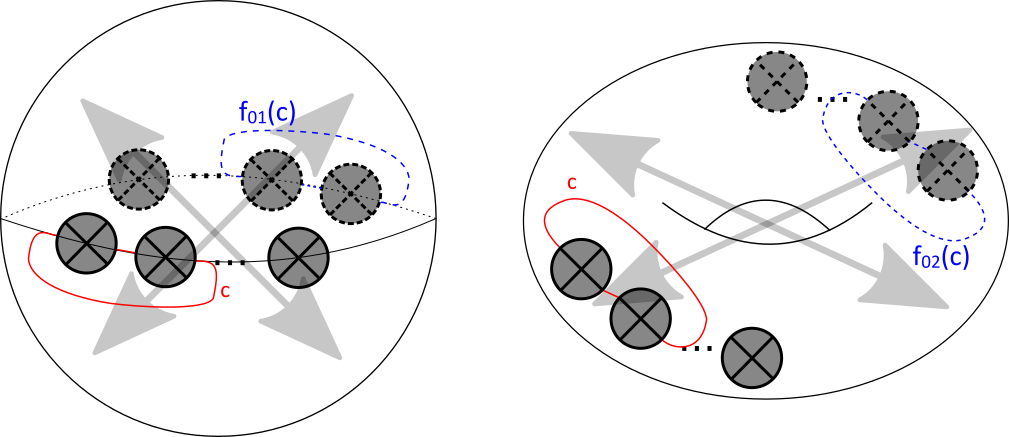}
				\caption{Free involutions.}
				\label{figfree2}\end{center}
		\end{figure}
		Since $g\geq 8$, we can always find a curve $c$ such that either $c$ and $f_{01}(c)$ or $c$ and $f_{02}(c)$ form a type 2 standard pair (Figure \ref{figfree2}). This completes the proof in Case 1.
		
For the rest of the proof we assume that the action of $\left\langle\phi \right\rangle$ is not free. We will use the NEC groups introduced in Section \ref{sec:nec}. For a cyclic group $\left\langle \phi \right\rangle$ acting on a non-orientable surface $N_g$ there exist NEC groups $\Lambda$, $\Gamma$ such that $\Gamma\triangleleft\Lambda$ and $\Gamma$ is torsion-free, $\left\langle \phi \right\rangle\cong \Lambda/\Gamma$, $N_g\cong \mathbb{H}^2/\Gamma$ and $N_g/\left\langle \phi \right\rangle\cong \mathbb{H}^2/\Lambda$. We identify $\Lambda/\Gamma$ with $\mathbb{Z}_n$ and let $\theta:\Lambda\rightarrow\mathbb{Z}_n$ be the canonical projection. Our goal is to find a curve $c$ on $\mathbb{H}^2/\Gamma$ and a generator $y\in \Lambda/\Gamma$ such that $c$ and $y(c)$ form a standard pair of either type. Then Theorem \ref{noninv} will follow from Lemma \ref{L2.2}. To achieve this, we construct the fundamental region $D$ of $\Gamma$ from the fundamental region $P$ of $\Lambda$, namely, $D=P\cup \tilde{y}P\cup...\cup \tilde{y}^{n-1}P$ for $n=\#\phi$, where $y=\tilde{y}\Gamma$. After we identify the edges of $D$ paired by elements of $\Gamma$, we obtain the surface $N_g$.

	\textbf{Case 2:} $\#f$ is not a power of 2. By raising $f$ to an appropriate power we can assume that $\# f$ is an odd prime. The group $\Lambda$ has no reflections, as there are no elements of order 2 in $\mathbb{Z}_p$ for them to map onto. Moreover, $m_i=p$ for $i=1,...,r$. The signature of $\Lambda$ is
\[
\left(h;-;[(p)^r]);\{-\}\right).
\]
	The marked polygon $P$ associated with $\Lambda$ has the surface symbol
	\begin{equation*}
	\xi_1\xi_1'...\xi_r\xi_r'\alpha_1\alpha_1^*...\alpha_h\alpha_h^*.
	\end{equation*}	
	Notice that $r\geq 1$, as we assumed the action is not free, and $h\geq 1$. The element $x_1$ is of order $p$, therefore it does not lie in the kernel $\Gamma$; we can assume $\theta(x_1)=1$. We can express $\Lambda$ as a sum of cosets of $\Gamma$ in the following way:
	\begin{equation*}
	\Lambda=\Gamma\cup x_1\Gamma\cup...\cup x_1^{p-1}\Gamma.
	\end{equation*}
	From this we conclude that the polygon $D$ which is the fundamental region for $\Gamma$ can be expressed as
	\begin{equation*}
	P\cup x_1P\cup...\cup x_1^{p-1}P=D
	\end{equation*}
	We take $y=x_1\Gamma$ as our generator of $\Lambda/\Gamma$.
	
	If $h\geq 2$, then by replacing $\theta$ with a topologically conjugate epimorphism if neccessary we can assume $d_l\in\Gamma$ for $l>1$ (see proof of Theorem \ref{thm1} in \cite{GMJ}). Suppose that $h\geq 2$; then $d_2\in\Gamma$. We can find a curve $c$ such that $c$ and $y(c)$ form a type 1 standard pair: $c$ is a projection on $\mathbb{H}^2/\Gamma$ of the union of two arcs on $D$, one connecting a point $p\in\alpha_2^*$ to $x_1(d_2(p))\in x_1(\alpha_2)$ and other one connecting $d_2(p)\in\alpha_2$ to $x_1(p)\in x_1(\alpha_2^*)$ (see Figure \ref{figh2} with $l=2$). As $d_2\in\Gamma$ identifies $\alpha_2^*$ with $\alpha_2$ and $x_1d_2x_1^{-1}\in\Gamma$ identifies $x_1(\alpha_2^*)$ with $x_1(\alpha_2)$, $c$ is indeed a two-sided simple closed curve on $\mathbb{H}^2/\Gamma$. The same works if $h=1$ and $d_1\in\Gamma$.
		\begin{figure}[!htbp]\begin{center}
			\includegraphics[scale=.5]{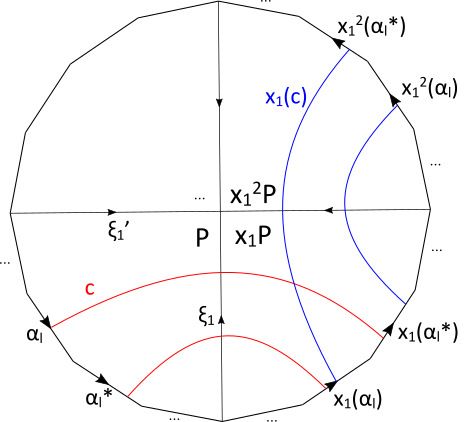}
			\caption{A type 1 standard pair for $d_l\in\Gamma$.}
			\label{figh2}\end{center}
	\end{figure}
	It remains to consider the case when $h=1$ and $d_1\notin\Gamma$. Then by the Riemann-Hurwitz formula $r\geq 2$. If $\theta(\xi_i)\in\{1,...,p-2\}$ for some $i\in\{2,...,r\}$, then there exists $n\in\{2,...,p-1\}$ such that $x_1^nx_i\in\Gamma$. We choose the curve $c$ as the projection of an arc on $D$ connecting the centre of $\xi_i'$ to the centre of $x_1^n(\xi_i)$; since $x_1^nx_i$ identifies these two edges, $c$ is a two-sided closed curve and so is its image $y(c)$; the curves $c$ and $y(c)$ form a type 1 standard pair as in Figure \ref{figpolyr} with $A=\xi_i$. If $\theta(\xi_i)=p-1$ for all $i\in\{2,...,r\}$, then in particular $\theta(x_r)=p-1$ and by composing $\theta$ with the automorphism $\gamma$ defined in Section \ref{topeq} we get $\theta\circ\gamma(x_r)=\theta(x_r^{-1})=-(p-1)=1$. We can then repeat the reasoning with $i=r$.
	\begin{figure}[!htbp]\begin{center}
			\includegraphics[scale=.5]{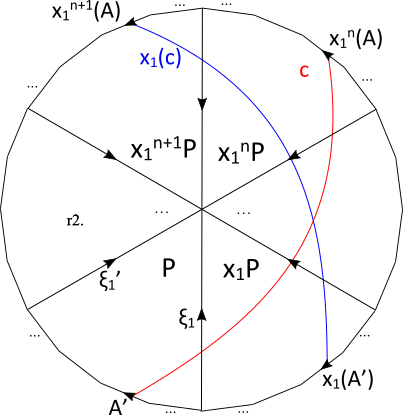}
			\caption{A type 1 standard pair.}
			\label{figpolyr}\end{center}
	\end{figure}
	
	\textbf{Case 3:} $\#f$ is a power of 2. By raising $f$ to an appropriate power we can assume that $\# f=4$. The signature of $\Lambda$ is $\left(h;\pm;[m_1,...,m_r]);\{()^k\}\right)$, where $m_i\in\left\{2,4\right\}$, $h+k>0$ and $k>0$ if the sign is "+" because $\Lambda$ has to contain an orientation-reversing element, and $r+k>0$ since we assume that the action is not free. The marked polygon $P$ associated with $\Lambda$ has the surface symbol $\xi_1\xi_1'...\xi_r\xi_r'\epsilon_1\gamma_{1}\epsilon_1'...\epsilon_k\gamma_{k}\epsilon_k'\alpha_1\beta_1'\alpha_1'\beta_1...\alpha_h\beta_h'\alpha_h'\beta_h$  if the sign is "+" and $\xi_1\xi_1'...\xi_r\xi_r'\epsilon_1\gamma_{1}\epsilon_1'...\epsilon_k\gamma_{k}\epsilon_k'\alpha_1\alpha_1^*...\alpha_h\alpha_h^*$ otherwise.
	
\begin{lem}\label{intlem}
Let $\tilde{y}=x_i$ for some $1\leq i\leq r$ or $\tilde{y}=e_j$ for some $1\leq j\leq k$ and suppose $\theta(\tilde{y})\in\{1,3\}$.
\begin{enumerate}
\item[(a)] If there is an orientation-preserving generator $z$ of $\Lambda$ such that $z\neq\tilde{y}$ and either $\theta(z)=\theta(\tilde{y})$ or $\theta(z)=2$, then there exists a curve $c$ on $N_g=\mathbb{H}^2/\Gamma$ such that, for $y=\tilde{y}\Gamma$, $(c,y(c))$ is a type 1 standard pair.
\item[(b)] If $k\geq 2$ and $\theta(e_s)=0$ for some $1\leq s\leq k$, then there exists a curve $c$ on $N_g=\mathbb{H}^2/\Gamma$ such that, for $y=\tilde{y}\Gamma$, $(c,y(c))$ is a type 2 standard pair.
\end{enumerate}
\end{lem}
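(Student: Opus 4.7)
The plan is to realize $c$ as the projection to $N_g \cong \mathbb{H}^2/\Gamma$ of an arc $\beta$ drawn inside the fundamental polygon $D = P \cup \tilde{y}P \cup \tilde{y}^2 P \cup \tilde{y}^3 P$, with endpoints on $\partial D$ glued by some element $\delta \in \Gamma$. The image $y(c)$ is then the projection of $\tilde{y}(\beta)$, so both the intersection number with $c$ and the connectivity of the complement can be read off from the picture in $D$ together with the boundary identifications.

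For part (a) the element $z$ produces the required $\delta \in \Gamma$: if $\theta(z) = \theta(\tilde{y})$ set $\delta = \tilde{y} z^{-1}$, and if $\theta(z) = 2$ set $\delta = \tilde{y}^2 z^{-1}$, using that $\theta(\tilde{y}) \in \{1,3\}$. Denote by $\mu, \mu'$ the pair of edges of $P$ identified by $\tilde{y}$ and by $\tau, \tau'$ the pair identified by $z$. Then $\delta$ glues $\tau$ to $\tilde{y}(\tau')$ (respectively $\tilde{y}^2(\tau')$), which lives in $\tilde{y}P$ (resp.\ $\tilde{y}^2 P$). I take $\beta$ to go from a generic point $p \in \tau$ across the shared edge $\mu = \tilde{y}(\mu')$ between $P$ and $\tilde{y}P$, ending at $\delta(p)$, in the spirit of Figure \ref{figpolyr}. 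The curve $c$ is two-sided because $\delta$ is orientation-preserving, and non-separating because $\beta$ crosses an interior edge of $D$. A local inspection in $\tilde{y}P$ shows that $\beta$ and $\tilde{y}(\beta)$ meet transversely in exactly one point, hence $i(c, y(c)) = 1$.

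For part (b) the assumption $\theta(e_s) = 0$ places $e_s \in \Gamma$, so $\gamma_s$ descends to a two-sided simple closed curve on $N_g$. I take $\beta$ to be an arc in $P$ from a point $p \in \epsilon_s'$ to $e_s(p) \in \epsilon_s$, routed so as to pick up a homologically nontrivial loop — for instance by wrapping around a second period cycle, whose existence is guaranteed by $k \geq 2$ — so that $c$ is non-separating. Since $\beta \subset P$ and $\tilde{y}(\beta) \subset \tilde{y}P$ lie in distinct copies of $P$, we have $i(c, y(c)) = 0$. Connectedness and non-orientability of $N_g \setminus (c \cup y(c))$ are verified on $D$: after removing the two arcs, the remaining region is still connected in $D$ (because $\beta$ and $\tilde{y}(\beta)$ meet only the interior of their respective copies of $P$), and the surviving orientation-reversing generators (either the glide reflections $d_l$ in the non-orientable case, or the reflections attached to further period cycles in the orientable case) supply the crosscaps needed in the complement.

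The chief difficulty is the combinatorial bookkeeping of edge identifications in $D$: one must ensure that the endpoints $\delta(p)$, $\tilde{y}(p)$, $\tilde{y}\delta(p)$ and all crossings behave as drawn and do not conspire to create extra intersections or to disconnect the complement. This reduces to a finite case analysis on the possible types of $\tilde{y}$ and $z$, carried out after possibly replacing $\theta$ by a topologically conjugate epimorphism using the automorphisms $\rho_i, \lambda_j, \sigma, \pi, \omega, \gamma, \epsilon$ listed in Section \ref{topeq} so as to bring it into a convenient normal form.
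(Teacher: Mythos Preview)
Your treatment of part (a) is essentially the paper's argument: an element of $\Gamma$ built from $\tilde{y}$ and $z$ identifies an edge of $P$ with an edge of a neighbouring copy $\tilde{y}^nP$, and the projected arc together with its $y$-image intersect once. The paper makes slightly different choices of the element (for $\tilde{y}=x_1$, $\theta(z)=1$ it uses $x_1^3z$ rather than your $x_1z^{-1}$, landing in $x_1^3P$ rather than $x_1P$), and it separates the cases $\tilde{y}=x_i$ versus $\tilde{y}=e_j$ with explicit figures, but the mechanism is the same.

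For part (b) your construction diverges from the paper's, and the divergence creates a gap. You take $c$ to be the projection of an arc in $P$ joining $\epsilon_s'$ to $\epsilon_s$ and then appeal vaguely to ``surviving orientation-reversing generators'' for non-orientability of the complement. The paper instead takes $c$ to be the projection of the reflection edge $\gamma_s$ itself: since $e_s\in\Gamma$, this is already a two-sided simple closed curve on $N_g$, with no routing choices needed. Connectedness of $N_g\setminus(c\cup y(c))$ is immediate because $\tilde{y}^2c_s\in\Gamma$ identifies $\gamma_s$ with $\tilde{y}^2(\gamma_s)$, and non-orientability is witnessed by the projection of an arc joining the midpoints of $\gamma_t$ and $\tilde{y}^2(\gamma_t)$ for some $t\ne s$ (this is where $k\ge 2$ is used), which is one-sided because $\tilde{y}^2c_t\in\Gamma$ is orientation-reversing. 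Your sketch does not supply a concrete one-sided curve in the complement, and in the ``$+$'' case with $h=0$ there are no glide reflections to fall back on; you would have to trace exactly how the reflections $c_j$ interact with the pieces into which $\beta$ and $\tilde{y}(\beta)$ cut $D$, which you have not done. The paper's choice of $c=\gamma_s$ sidesteps all of this.
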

\begin{proof} We will consider the cases when $\tilde{y}$ is equal to either $x_1$ or $e_1$.
\item[(a)]First, let $\tilde{y}=x_1$. We can assume that $\theta(x_1)=1$, for otherwise we can compose $\theta$ with an automorphism of $\mathbb{Z}_4$ interchanging 1 and 3. Let $z$ be an orientation-preserving generator of $\Lambda$ which maps the edge $\zeta'$ of the marked polygon $P$ onto $\zeta$. Suppose that $\theta(z)=\theta(\tilde{y})=1$; then $x_1^3z\in\Gamma$ identifies $\zeta'$ with $x_1^3(\zeta)$ and we can find a curve $c$ such that $c$ and $y(c)$ form a type 1 standard pair as in Figure \ref{figpolyr} with $A=\zeta$ and $n=3$. If $\theta(z)=2$, then $x_1^2z\in\Gamma$ identifies $\zeta'$ with $x_1^2(\zeta)$ and we can find a curve $c$ such that $c$ and $y(c)$ form a type 1 standard pair as in Figure \ref{figpolyr} with $A=\zeta$ and $n=2$. Now let $\tilde{y}=e_1$; again, we can assume that $\theta(e_1)=1$. Suppose first that $\theta(z)=\theta(\tilde{y})=1$; then $e_1z^{-1}\in\Gamma$ identifies $e_1(\zeta')$ with $\zeta$ and we can find a curve $c$ such that $c$ and $y(c)$ form a type 1 standard pair as in Figure \ref{figp2e1z1}.
	\begin{figure}[!htbp]\begin{center}
		\includegraphics[scale=.5]{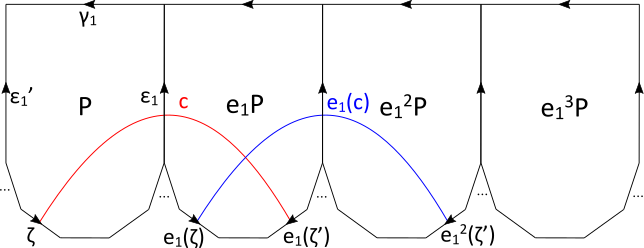}
		\caption{A type 1 standard pair for $\theta(e_1)=1$ and $\theta(z)=1$.}
		\label{figp2e1z1}\end{center}
\end{figure}Now let $\theta(z)=2$; then $e_1^2z\in\Gamma$ identifies $\zeta'$ with $e_1^2(\zeta)$ and we can find a curve $c$ such that $c$ and $y(c)$ form a type 1 standard pair as in Figure \ref{figp2e1z2}.\begin{figure}[!htbp]\begin{center}
		\includegraphics[scale=.5]{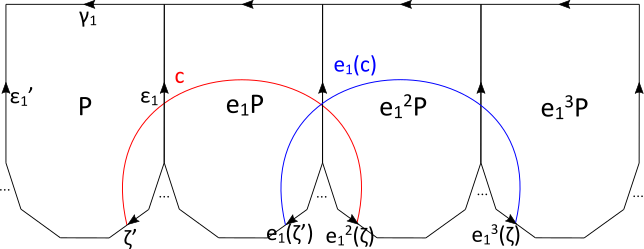}
		\caption{A type 1 standard pair for $\theta(z)=3$ and $\theta(z)=1$.}
		\label{figp2e1z2}\end{center}\end{figure}
\item[(b)]Let $k\geq 2$ and $\theta(e_s)=0$. Because $e_s\in\Gamma$, the image of $\gamma_s$ in $N_g$ is a two-sided closed curve. Let $c$ be this curve. The complement of $c\cup y(c)$ in $N_g$ is the image of the complement of $\gamma_s\cup \tilde{y}(\gamma_s)\cup \tilde{y}^2(\gamma_s)\cup \tilde{y}^3(\gamma_s)$ in $D=P\cup \tilde{y}P\cup\tilde{y}^2P\cup\tilde{y}^3P$ ($\tilde{y}^2c_s$ identifies $\gamma_s$ with $\tilde{y}^2(\gamma_s)$) and is therefore connected. Consider the arc connecting the midpoint of $\gamma_t$ with the midpoint of $y^2(\gamma_t)$ for $t\neq s$. The image of this arc is a one-sided curve on $N_g\setminus (c\cup y(c))$, which is therefore non-orientable. This completes the proof of the lemma.
\end{proof}
	
	Since $\theta$ is an epimorphism, there exists at least one generator of $\Lambda$ which maps onto an element of order 4 in $\mathbb{Z}_{4}$. Without loss of generality we take this element of $\mathbb{Z}_{4}$ to be 1. The generator mapped onto 1 can be one of $x_i$, $i\in\{1,...,r\}$, $e_j$, $j\in\{1,...,k\}$ and $d_l$, $a_l$ or $b_l,$ $l\in\{1,...,h\}$. We will examine the cases for $x_1$, $e_1$, $d_1$ and $a_1$.

	\textbf{Subcase 3.1:} $\theta(x_1)=1$. Our generator of $\Lambda/\Gamma$ is $x_1\Gamma\in\Lambda/\Gamma$. We can write $\Lambda$ as a sum of cosets of $\Gamma$:
	\begin{equation*}
	\Lambda=\Gamma\cup x_1\Gamma\cup x_1^{2}\Gamma\cup x_1^{3}\Gamma
	\end{equation*}
	Then the polygon $D$ which is the fundamental region for $\Gamma$ is
	\begin{equation*}
	P\cup x_1P\cup x_1^{2}P \cup x_1^{3}P=D
	\end{equation*}
	By the long relation there is another generator whose image has order $4$, and it is either $x_i$ for some $i\in\{2,...,r\}$ or $e_j$ for some $j\in\{1,...,k\}$.
	
	If $\#\theta(x_i)=4$, then $\theta(x_i)$ is equal to either 1 or 3. If $\theta(x_i)=1$, we can find a curve $c$ such that $c$ and $y(c)$ form a type 1 standard pair by Lemma \ref{intlem}(a) with $\tilde{y}=x_1$ and $z=x_i$. If $\theta(x_i)=3$ and the sign is "$-$", then by composing $\theta$ with $\rho_i\circ...\circ\rho_{r-1}\circ\gamma$ (see Section \ref{topeq})  we get $\theta\circ\rho_i\circ...\circ\rho_{r-1}\circ\gamma(x_r)=\theta(x_i^{-1})=1$ and we apply Lemma \ref{intlem}(a) with $\tilde{y}=x_1$ and $z=x_r$. If the sign is "$+$" and $h\geq 1$, then by composing $\theta$ with $\rho_i\circ...\circ\rho_{r-1}\circ\sigma^q$, $q\in\mathbb{Z}$, we get $\theta\circ\rho_i\circ...\circ\rho_{r-1}\circ\sigma^q(b_1)=\theta(b_1)+q\cdot\theta(x_i)=\theta(b_1)+q\cdot3$. By choosing the value of $q$ we can set any desired value of $\theta(b_1)$; we set $\theta(b_1)=1$. Then we can once again apply Lemma \ref{intlem}(a), with $z=b_1$.
If the sign is "$+$" and $h=0$, then $k\geq 1$. If there exists an orientation-preserving generator $z$ different from $x_1$ and $x_i$ and such that $\theta(z)\neq 0$, we apply Lemma \ref{intlem}(a), taking either $x_1$ or $x_i$ as $\tilde{y}$. Otherwise $r=2$ and by the Hurwitz-Riemann formula $k\geq 2$ and $\theta(e_k)=0$. We can then apply Lemma \ref{intlem}(b).

	If $\#\theta(e_j)=4$, then $\theta(e_j)$ is equal to either 1 or 3.	If $\theta(e_j)=1$, we can find a curve $c$ such that $c$ and $y(c)$ form a type 1 standard pair by Lemma \ref{intlem}(a) with $\tilde{y}=x_1$ and $z=e_j$. If $\theta(e_j)=3$ and the sign is "$-$", then by composing $\theta$ with $\lambda_j\circ...\circ\lambda_{k-1}\circ\epsilon$ we get $\theta\circ\lambda_j\circ...\circ\lambda_{k-1}\circ\epsilon(e_k)=\theta(e_j^{-1})=1$ and we apply Lemma \ref{intlem}(a) with $z=e_k$. If the sign is "$+$" and $h\geq 1$, then by composing $\theta$ with $\lambda_j\circ...\circ\lambda_{k-1}\circ\pi^q$, $q\in\mathbb{Z}$, we get $\theta\circ\lambda_j\circ...\circ\lambda_{k-1}\circ\pi^q(b_1)=\theta(b_1)+q\cdot\theta(e_j)$. By choosing the value of $q$ we can set any desired value of $\theta(b_1)$; we set $\theta(b_1)=1$. Then we can once again apply Lemma \ref{intlem}(a), with $z=b_1$. If the sign is "$+$" and $h=0$, then by the Riemann-Hurwitz formula $r+k\geq 3$. If there exists an orientation-preserving generator $z$ different from $x_1$ and $e_j$ and such that $\theta(z)\neq 0$, we apply Lemma \ref{intlem}(a), taking either $x_1$ or $e_j$ as $\tilde{y}$. Otherwise $r=1$ and $k\geq 2$, thus $\theta(e_k)=0$ and we apply Lemma \ref{intlem}(b).
	
	\textbf{Subcase 3.2:} $\theta(e_1)=1$.	
Now the chosen generator $y$ is equal to $e_1\Gamma\in\Lambda/\Gamma$. We write $\Lambda$ as a sum of cosets of $\Gamma$:
	\begin{equation*}
	\Lambda=\Gamma\cup e_1\Gamma\cup e_1^{2}\Gamma\cup e_1^{3}\Gamma
	\end{equation*}
	Then the polygon $D$ which is the fundamental region for $\Gamma$ is
	\begin{equation*}
	P\cup e_1P\cup e_1^{2}P \cup e_1^{3}P=D
	\end{equation*}
	As before, there is another generator whose image has order $4$, and it is either $x_i$ for some $i\in\{1,...,r\}$ or $e_j$ for some $j\in\{2,...,k\}$. We assume that $\theta(x_i)=2$ for $1\leq i\leq r$, for otherwise we are in Subcase 3.1. This leaves us with some $e_j$ such that $\#\theta(e_j)=4$.
	
If $\#\theta(e_j)=4$, then $\theta(e_j)$ is equal to either 1 or 3. If $\theta(e_j)=1$, we can find a curve $c$ such that $c$ and $y(c)$ form a type 1 standard pair by Lemma \ref{intlem}(a) with $\tilde{y}=e_1$ and $z=e_j$. If $\theta(e_j)=3$ and the sign is "$-$", then by composing $\theta$ with $\lambda_j\circ...\circ\lambda_{k-1}\circ\epsilon$ we get $\theta\circ\lambda_j\circ...\circ\lambda_{k-1}\circ\epsilon(e_k)=\theta(e_j^{-1})=1$ and we apply Lemma \ref{intlem}(a) with $z=e_k$. If the sign is "$+$" and $h\geq 1$, then by composing $\theta$ with $\lambda_j\circ...\circ\lambda_{k-1}\circ\pi^q$, $q\in\mathbb{Z}$, we get $\theta\circ\lambda_j\circ...\circ\lambda_{k-1}\circ\pi^q(b_1)=\theta(b_1)+q\cdot\theta(e_j)$. By choosing the value of $q$ we can set any desired value of $\theta(b_1)$; we set $\theta(b_1)=1$. Then we can once again apply Lemma \ref{intlem}(a), with $z=b_1$. If the sign is "$+$" and $h=0$, then by the Riemann-Hurwitz formula $r+k\geq 3$. If there exists an orientation-preserving generator $z$ different from $e_1$ and $e_j$ and such that $\theta(z)\neq 0$, we apply Lemma \ref{intlem}(a), taking either $e_1$ or $e_j$ as $\tilde{y}$. Otherwise $r=0$ and $k\geq 3$, thus $\theta(e_k)=0$ and we apply Lemma \ref{intlem}(b).

	\textbf{Subcase 3.3:} $\theta(d_1)=1$.
	Now the chosen generator $y$ is equal to $d_1\Gamma\in\Lambda/\Gamma$. We write $\Lambda$ as a sum of cosets of $\Gamma$:
	\begin{equation*}
	\Lambda=\Gamma\cup d_1\Gamma\cup d_1^{2}\Gamma\cup d_1^{3}\Gamma
	\end{equation*}
	Then the polygon $D$ which is the fundamental region for $\Gamma$ is
	\begin{equation*}
	P\cup d_1P\cup d_1^{2}P \cup d_1^{3}P=D
	\end{equation*}
	By the long relation, there exists another generator not in the kernel, although the image of this generator is not necessarily of order $4$. We assume that $\theta(x_i)=2$ for all $i$, for otherwise we are in Subcase 3.1. Likewise we assume that $\theta(e_j)\in\{0,2\}$ for all $j$; if $\theta(e_j)\in\{1,3\}$ for some $j$, we are in Subcase 3.2.
If $\theta(d_l)=0$ for some $l$, we can find a curve $c$ such that $c$ and $y(c)$ form a type 1 standard pair (Figure \ref{figp2d1d0}); $d_l$ identifies $\alpha_l^*$ with $\alpha_l$, analogously for their images.	\begin{figure}[!htbp]\begin{center}
		\includegraphics[scale=.5]{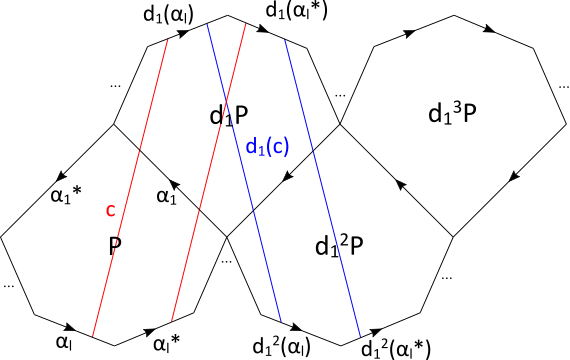}
		\caption{A type 1 standard pair for $\theta(d_1)=1$ and $\theta(d_l)=0$.}
		\label{figp2d1d0}\end{center}
\end{figure}
Similarly, if $\theta(d_l)=2$ for some $l$, we can find a curve $c$ such that $c$ and $y(c)$ form a type 1 standard pair (Figure \ref{figp2d1d2}); $d_1^2d_l\in\Gamma$ identifies $\alpha_l^*$ with $d_1^2(\alpha_l)$ and $d_1d_ld_1^{-3}\in\Gamma$ identifies $d_1^3(\alpha_l^*)$ with $d_1(\alpha_l)$ (recall that $d_1^4\in\Gamma$ identifies $\alpha_1^*$ with $d_1^3(\alpha_1)$). Therefore we can assume that $\theta(d_l)\in\{1,3\}$ for all $l$.
	\begin{figure}[!htbp]\begin{center}
		\includegraphics[scale=.5]{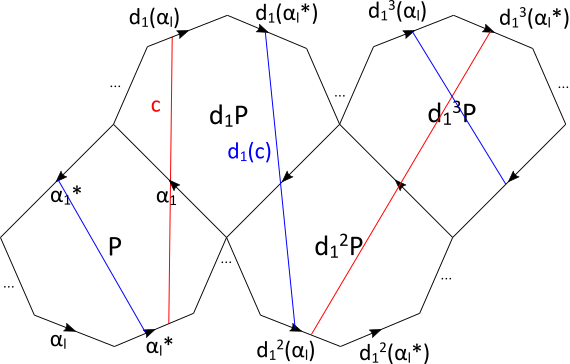}
		\caption{A type 1 standard pair for $\theta(d_1)=1$ and $\theta(d_l)=2$.}
		\label{figp2d1d2}\end{center}
\end{figure}
Then by Lemma \ref{nonorient} $k\geq 1$; otherwise the image of $\Lambda^+$ under $\theta$ is not equal to $\mathbb{Z}_4$.

If $\theta(e_j)=2$ for some $j$, then $e_jd_1^{-2}$ identifies $d_1^2(\epsilon_j')$ with $\epsilon_j$ and we can find a curve $c$ such that $c$ and $y(c)$ form a type 2 standard pair (Figure \ref{figp2d1z2} with $A=\epsilon_j$).
\begin{figure}[!htbp]\begin{center}
		\includegraphics[scale=.5]{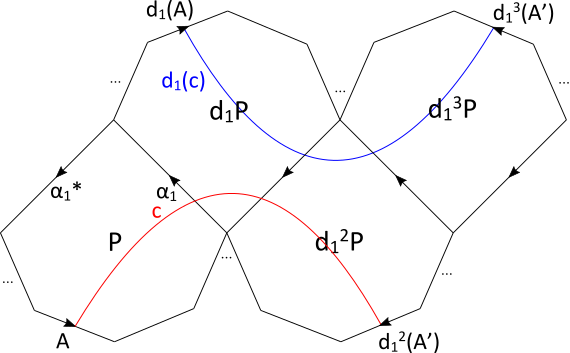}
		\caption{A type 2 standard pair for $\theta(d_1)=1$.}
		\label{figp2d1z2}\end{center}\end{figure}
By the Riemann-Hurwitz formula $r\geq 1$, $k\geq 2$ or $h\geq 2$. If $r \geq 1$ or $k\geq 2$ and assuming $j=1$, there exists an edge ($\xi_1$ or $\gamma_2$) on the segment of $\partial D$ from $A=\epsilon_1$ to $d_1^2(\epsilon_1')$ (anticlockwise) identified with an edge between $d_1^2(\epsilon_1')$ and $d_1^3(\epsilon_1')$ so that the complement of $c\cup y(c)$ is connected. If $h\geq 2$, then by assumption $\theta(d_2)\in\{1,3\}$. If $\theta(d_2)=1$, then the edge $\alpha_2^*$ on the segment from $d_1(\epsilon_1)$ to $\epsilon_1$ is identified with the edge $d_1^3(\alpha_2)$ on the segment from $d_1^3(\epsilon_1')$ to $d_1(\epsilon_1)$. If $\theta(d_2)=3$, then the edge $d_1^3(\alpha_2^*)$ on the segment from $d_1^3(\epsilon_1')$ to $d_1(\epsilon_1)$ is identified with the edge $\alpha_2$ on the segment from $d_1(\epsilon_1)$ to $\epsilon_1$. In any of these cases there exists a curve on $N_g$ intersecting $c\cup y(c)$ in one point, which means that $N_g\setminus (c\cup y(c))$ is connected. A projection of the arc joining the midpoints of $\gamma_1$ and $d_1^2(\gamma_1)$ is a one-sided curve on $N_g\setminus (c\cup y(c))$($d_1^2c_1\in\Gamma$ identifying $\gamma_1$ with $d_1^2(\gamma_1)$) which is therefore non-orientable. From now on we assume $\theta(e_j)=0$ for all $1\leq j\leq k$.

If $k\geq 2$, then the construction in the proof of Lemma \ref{intlem}(b) works for $\tilde{y}=d_1$ (since $d_1^2$ is orientation-preserving). We assume that $k=1$ and by the Riemann-Hurwitz formula $r\geq 2$ or $h\geq 2$. If $r\geq 2$, then we can find a curve $c$ such that  $c$ and $y(c)$ form a type 2 standard pair (Figure \ref{figp2d1z2} with $A=\xi_1$).
There exists an edge (namely $\gamma_1$) in the segment of $\partial D$ from $\xi_1$ to $d_1^2(\xi_1')$ (anticlockwise) which is identified with an edge on the segment from $d_1^2(\xi_1')$ to $d_1^3(\xi_1')$, so that the complement of $c\cup y(c)$ is connected. A projection of the sum of two arcs, one joining the midpoints of $\gamma_1$ and $\xi_2'$ and the other joining the midpoints of $d_1^2(\xi_2)$ and $d_1^2(\gamma_1)$, is a one-sided curve on $N_g\setminus (c\cup y(c))$ ($d_1^2c_1\in\Gamma$ identifying $\gamma_1$ with $d_1^2(\gamma_1)$ and $d_1^2x_2\in\Gamma$ identifying $\xi_2'$ with $d_1^2(\xi_2)$), which is therefore non-orientable. If $h\geq 2$, then by assumption $\theta(d_2)\in\{1,3\}$. If $\theta(d_2)=1$, then $d_1^3d_2\in\Gamma$ identifies $\alpha_2^*$ with $d_1^3(\alpha_2)$ and we can find a curve $c$ such that $c$ and $y(c)$ form a type 1 standard pair (Figure \ref{figp2d1d1}).
\begin{figure}[!htbp]\begin{center}
		\includegraphics[scale=.5]{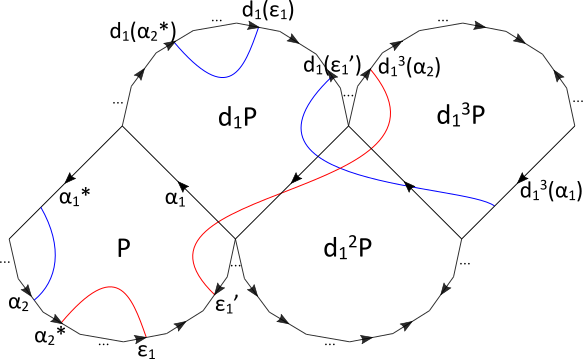}
		\caption{A type 1 standard pair for $\theta(d_1)=1$, $\theta(e_1)=0$ and $\theta(d_2)=1$.}
		\label{figp2d1d1}\end{center}\end{figure}
	If $\theta(d_2)=3$, then $d_1d_2\in\Gamma$ identifies $\alpha_2^*$ with $d_1(\alpha_2)$ and we can find a curve $c$ such that $c$ and $y(c)$ form a type 2 standard pair (Figure \ref{figp2d1d3}). The edge $\gamma_1$ on the segment of $\partial D$ from $\epsilon_1$ to $\epsilon_1'$ is identified with the edge $d_1^2(\gamma_1)$ on the segment from $d_1^2(\alpha_2)$ to $d_1(\epsilon_1')$ and the edge $d_1(\gamma_1)$ on the segment from $d_1(\epsilon_1')$ to $d_1(\epsilon_1)$ is identified with the edge $d_1^3(\gamma_1)$ on the segment from $d_1^2(\alpha_2)$ to $d_1(\epsilon_1')$, ensuring the connectedness of $N_g\setminus (c\cup y(c))$. A projection of the union of two arcs, one connecting the midpoint of $d_1^2(\gamma_1)$ with the midpoint of $d_1^3(\alpha_1)$ and the other connecting the midpoint of $\alpha_1^*$ with the midpoint of $\gamma_1$, is a one-sided curve on $N_g\setminus (c\cup y(c))$, which is therefore non-orientable.
	\begin{figure}[!htbp]\begin{center}
			\includegraphics[scale=.5]{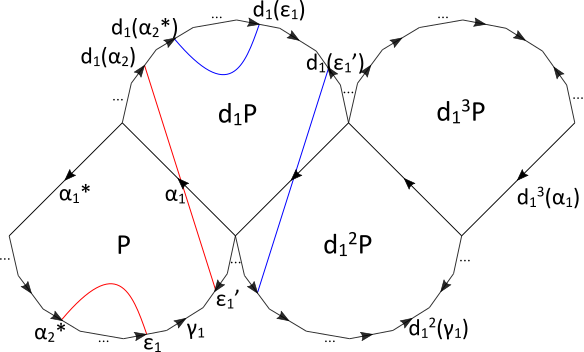}
			\caption{A type 2 standard pair for $\theta(d_1)=1$, $\theta(e_1)=0$ and $\theta(d_2)=3$.}
			\label{figp2d1d3}\end{center}\end{figure}
			
			\textbf{Subcase 3.4:} $\theta(a_1)=1$.
	The chosen generator $y$ is equal to $a_1\Gamma\in\Lambda/\Gamma$. Certainly $k\geq 1$. If there exists $j\in\{1,...,k\}$ such that $e_j\notin\Gamma$, we can assume that $\theta(e_j)=2$ and $a_1^2e_j$ identifies $\epsilon_j'$ with $a_1^2(\epsilon_j)$. Then we can find a curve $c$ such that $c$ and $y(c)$ form a type 1 standard pair (Figure \ref{figp2oa1e2}).
	\begin{figure}[!htbp]\begin{center}
			\includegraphics[scale=.5]{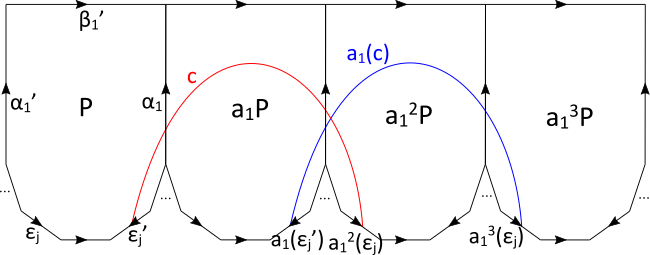}
			\caption{A type 1 standard pair for $\theta(a_1)=1$ and $\theta(e_j)=2$.}
			\label{figp2oa1e2}\end{center}
	\end{figure}
	Now if $e_j\in\Gamma$ for all $j\in\{1,...,k\}$, $e_1$ identifies $\epsilon_1'$ with $\epsilon_1$. If $\theta(b_1)=0$, $b_1$ identifies $\beta_1'$ with $\beta_1$ and we can find a curve $c$ such that $c$ and $y(c)$ form a type 2 standard pair (Figure \ref{figp2oa1e0b0}). The edge $\gamma_1$ on the segment of $\partial D$ from $\epsilon_j$ to $\epsilon_j'$ is identified with the edge $a_1^2(\gamma_1)$ on the segment from $a_1(\epsilon_j')$ to $a_1(\beta_1')$ and the edge $a_1(\gamma_1)$ on the segment from $a_1(\epsilon_j)$ to $a_1(\epsilon_j)$ is identified with the edge $a_1^3(\gamma_1)$ on the segment from $a_1(\epsilon_j')$ to $a_1(\beta_1')$ so that the complement of $c\cup y(c)$ is connected. A projection of the union of two arcs, one connecting the midpoint of $a_1^2(\gamma_1)$ with the midpoint of $a_1^3(\alpha_1)$ and the other connecting the midpoint of $\alpha_1'$ with the midpoint of $\gamma_1$, is a one-sided curve on the complement of $c\cup y(c)$, which is therefore non-orientable. If $\theta(b_1)\neq0$, then by composing $\theta$ with $\omega^q$, $q\in\mathbb{Z}$, we get $\theta\circ\omega^q(b_1)=\theta(b_1)+q\theta(a_1)$ and by choosing the right value of $q$ we can set $\theta\circ\omega^q(b_1)=0$.
		\begin{figure}[!htbp]\begin{center}
			\includegraphics[scale=.65]{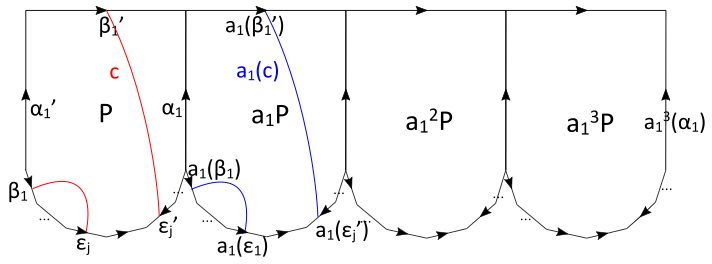}
			\caption{A type 2 standard pair for $\theta(a_1)=1$, $\theta(e_1)=0$ and $\theta(b_1)=0$.}\label{figp2oa1e0b0}\end{center}
	\end{figure}
This completes the proof of Theorem \ref{noninv}.
	\end{proof}
	\section{Classification of involutions}\label{sec:class}
\subsection{Surgeries}
We are going to define five surgeries which will help us construct all the involutions we need in easy steps. We begin with an oriented surface $X$ embedded in $\mathbb{R}^3$ and an involution $\varphi:X\rightarrow X$ induced either by a reflection across some plane in $\mathbb{R}^3$ or by a rotation by the angle $\pi$ about some axis (for example the hyperelliptic involution). Next we apply a series of surgeries that result in our chosen involution. After applying each surgery we obtain a new involution on a new surface $X'$, $\varphi':X'\rightarrow X'$, which we can then further modify.
\begin{description}
\item[Blowing up an isolated fixed point]
Let $x\in X$ be an isolated fixed point, $\varphi(x)=x$. We choose a disc $D\subset X$ such that $x\in D$ and $\varphi (D)=D$. Then $\varphi_{|D}$ is a rotation by the angle $\pi$. Let $X'=(X\setminus \mathrm{int}(D))/\sim$, where $\sim$ identifies antipodal points on $\partial D$. Then $\varphi':X'\rightarrow X'$ is the involution induced by $\varphi$. This surgery replaces an isolated fixed point with a one-sided oval.

\item[Blowing up a non-isolated fixed point] Let $c$ be an oval and $x\in c$. We choose a disc $D\subset X$ such that $x\in D$ and define $X'$ and $\varphi'$ as above. Now $\varphi_{|D}$ is a reflection about $c\cap D$. This surgery preserves the number of ovals, but changes the number of sides of one oval and adds one isolated fixed point.

\item[Blowing up a 2-orbit]
Let $x\in X$, $\varphi(x)\neq x$. We choose a disc $D\subset X$ such that $x\in D\subset X$ and $\varphi(D)\cap D=\emptyset$. Then $X'=X\setminus(\mathrm{int}(D)\cup \mathrm{int}(\varphi(D)))/\sim$, where $\sim$ identifies antipodic points on $\partial D$ and $\partial \varphi(D)$. This surgery adds two crosscaps that map onto one another and increases the genus of the quotient orbifold by 1.

\item[Adding a handle]

Let $x\in X$, $\varphi(x)\neq x$. We choose a disc $D$ as above. $X'=X\setminus (\mathrm{int}(D)\cup \mathrm{int}(\varphi(D)))/\sim$, where $\sim$ identifies $y$ with $\varphi(y)$ for $y\in\partial D$. If $X$ is orientable, then $X'$ is non-orientable if and only if $\varphi$ is orientation-preserving. This surgery adds a two-sided oval.

\item[Surface gluing]

Let $X$, $Y$ be oriented surfaces and $\phi :X\rightarrow X$, $\psi :Y\rightarrow Y$ be involutions, where $\phi$ is orientation-reversing (a reflection) and $\psi$ is orientation-preserving (a rotation). Choose two discs $D\subset X$ and $D'\subset X'$ such that $\phi(D)\cap D=\emptyset$ and $\psi(D')\cap D'=\emptyset$. Let $\theta:\partial D\rightarrow \partial D'$ be an orientation-reversing homeomorphism. Let $X'$ be the surface obtained by removing $\mathrm{int}(D)\cup \mathrm{int}(\phi(D))$ from $X$ and $\mathrm{int}(D')\cup \mathrm{int}(\psi(D'))$ from $Y$ and identifying $\partial D$ with $\partial D'$ via $\theta$ and $\partial \phi(D)$ with $\partial \psi(D') $ via $\psi\circ\theta\circ\varphi^{-1}$. Then $\phi$ and $\psi$ induce an involution on $X'$. Note that $\psi\circ\theta\circ\varphi^{-1}$ is orientation-preserving and $X'$ is non-orientable of genus $2(g(X)+g(Y)+1)$.
\begin{figure}[!htbp]\begin{center}
		\includegraphics[scale=.40]{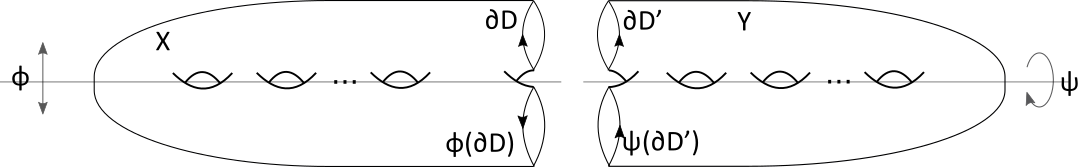}
		\caption{The left-hand side of the surface is reflected, and the right-hand side is rotated.}
	\end{center}
\end{figure}
\end{description}

\subsection{Involutions on $N_g$}\label{sec:invs}
Basing on Dugger's work \cite{Dug} and Theorem \ref{thm2}, we are going to define eleven involutions on a surface $N_g$, which will exhaust all possibilities up to conjugation. We remark that the problem of classification of involutions on compact surfaces is classical, especially in the case of orientable surfaces, but also for non-orientable surfaces several classification results have been obtained by different authors; see \S 1.7 in \cite{Dug}.

Let $f$ be an involution; in this section we do not distinguish between a mapping class and its representative. Let $N_g=\mathbb{H}^2/\Gamma$, $\left\langle f\right\rangle=\Lambda/\Gamma$ and let $\theta:\Lambda\rightarrow \mathbb{Z}_2$ be the canonical projection. The signature of $\Lambda$ is $(h;\pm;[(2)^r];\{()^k\})$.
Recall that $k_+=\#\{j\in\{1,...,k\}|\theta(e_j)=0\}=\#(\{e_1,...,e_k\}\cap\Gamma)$. Then $k_-=\#\{j\in\{1,...,k\}|\theta(e_j)=1\}$.
Further, notice that by the long relation
\begin{equation*}\begin{split}
\theta(x_1...x_re_1...e_k)&= 0\\
\theta(x_1)+...+\theta(x_r)+\theta(e_1)+...+\theta(e_k)&=0
\end{split}
\end{equation*}
which gives
\begin{equation*}
r+k_-\equiv 0\mod(2)
\end{equation*}
and by the Hurwitz-Riemann formula
\begin{equation*}\begin{split}
g-2&=2(\epsilon h+k-2)+r\\
g&=2\epsilon h +2k-2+r\\
g&\equiv r\mod(2)
\end{split}
\end{equation*}
where $\epsilon$ is equal to $1$ if $N_g/\left\langle f \right\rangle$ is non-orientable and $2$ otherwise. Additionally, also by the Hurwitz-Riemann formula, always $r+2k\leq g+2$ and $r+2k\leq g$ if the quotient orbifold is non-orientable.

By Theorem 1.16 of \cite{Dug} the numbers $r$, $k$, $k_+$ and $k_-$ together with information about the orientability of the quotient orbifold (a set of characteristics which the author refers to as the ''signed taxonomy'' of $C_2$-actions) determine an action up to conjugacy if either the surface is orientable, or the quotient orbifold is orientable, or $r+k_->0$. Otherwise, additional invariants are necessary which Dugger terms the $\epsilon$-invariant and the $DD$-invariant. Notice that Theorem 1.16 of \cite{Dug} coincides with Theorem \ref{thm2}.

Let us first consider the case when $r=0$ and $k=0$, that is, the action of $\left\langle f \right\rangle$ is free. Since $k_-=0$, $g$ must be even. This case was already covered in the proof of Theorem \ref{noninv} in Subsection \ref{sec:proofnoninv} (Figure \ref{figfree2}); we have denoted the two involutions $f_{01}$ and $f_{02}$, respectively.
	
Now let $r>0$ and $k=0$. Since $k_-=0$, $r$ must be even, and so must $g$. By Theorem 7.7 of \cite{Dug} (see also Theorem \ref{thm2}) there is exactly one conjugacy class of involutions in this case for fixed $r$, with a non-orientable quotient orbifold. We denote this involution $f_1$. As the representative of this class we can take a rotation by $\pi$ of an orientable surface of genus $r/2-1$ with $h$ 2-orbits blown up, as in Figure \ref{figIn1}.
\begin{figure}[!htbp]\begin{center}
		\includegraphics[scale=.5]{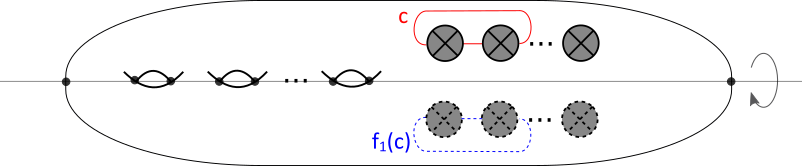}
		\caption{Involution $f_1$ with the set of fixed points marked.}
		\label{figIn1}\end{center}
\end{figure}

Next, let $r>0$, $k_-=0$ and $k_+>0$. Again, $k_-=0$ implies $r$ and $g$ are both even. By Theorem 7.9 of \cite{Dug} (see also Theorem \ref{thm2}) there are two conjugacy classes of involutions for fixed $r$ and $k_+$, one with a non-orientable and one with an orientable quotient orbifold. As a representative of the class with a non-orientable quotient orbifold we can take the reflection of an orientable surface of genus $k-1$ with $h$ 2-orbits blown up and $r$ non-isolated fixed points blown up, as in Figure \ref{figIn2}; we will denote it by $f_2$.
\begin{figure}[!htbp]
	\begin{center}
		\includegraphics[scale=.45]{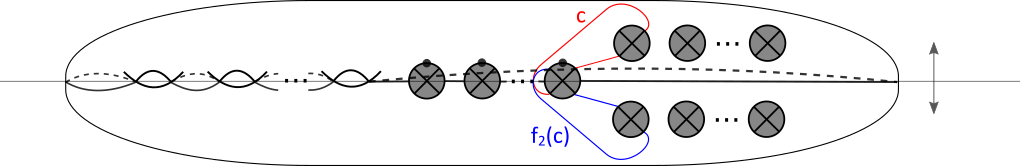}
		\caption{Involution $f_2$ with the set of fixed points marked.}
		\label{figIn2}\end{center}
\end{figure}
As a representative of the class with an orientable quotient orbifold we can take the reflection of an orientable surface of genus $k-1+2h$ with $h$ pairs of conjugate handles and $r$ non-isolated fixed points blown up, as in Figure \ref{figIn3}; we will denote it by $f_3$. By the Hurwitz-Riemann formula, in this case $r+2k\equiv g+2\mod(4)$.
\begin{figure}[!htbp]\begin{center}
		\includegraphics[scale=.45]{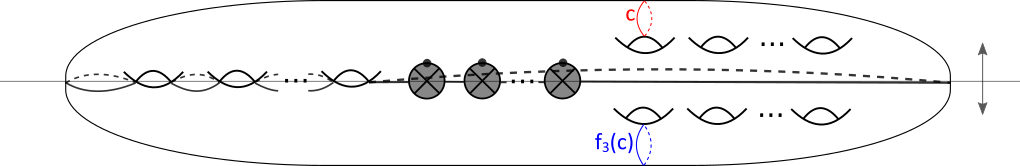}
		\caption{Involution $f_3$ with the set of fixed points marked.}
		\label{figIn3}\end{center}
\end{figure}

Next, let $r\geq 0$, $k_+\geq 0$ and $k_->0$. By Theorem 7.10 of \cite{Dug} (see also Theorem \ref{thm2}) there are two conjugacy classes of such involutions for fixed $r$, $k_-$ and $k_+$, one with a non-orientable and one with an orientable quotient orbifold. As the representative of the class with a non-orientable quotient orbifold we can take a reflection of an orientable surface of genus $k_{+}-1$ glued together with a rotated orientable surface of $k_{-}+r+1$ with $h$ 2-orbits blown up and $k_-$ isolated fixed points blown up (see Figure \ref{figIn4}); we will denote it by $f_4$.
\begin{figure}[!htbp]\begin{center}
		\includegraphics[scale=.35]{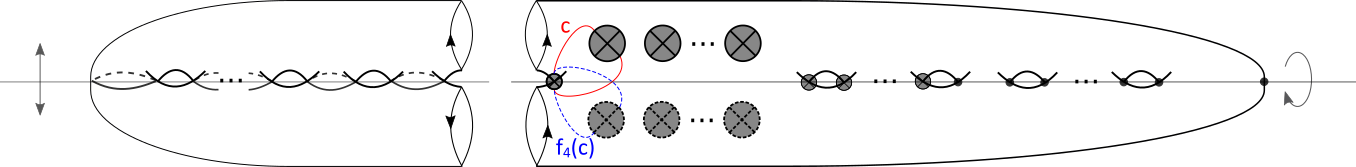}
		\caption{Involution $f_4$ with the set of fixed points marked.}
		\label{figIn4}\end{center}
\end{figure}
As a representative of the class with an orientable quotient orbifold we can take the reflection of an orientable surface of genus $k_{+}-1$ glued together with a rotated orientable surface of genus $k_{-}+r+1+2h$ with $h$ pairs of conjugate handles and $k_-$ isolated fixed point blown up (see Figure \ref{figIn5}); we will denote it by $f_5$.
\begin{figure}[!htbp]\begin{center}
		\includegraphics[scale=.35]{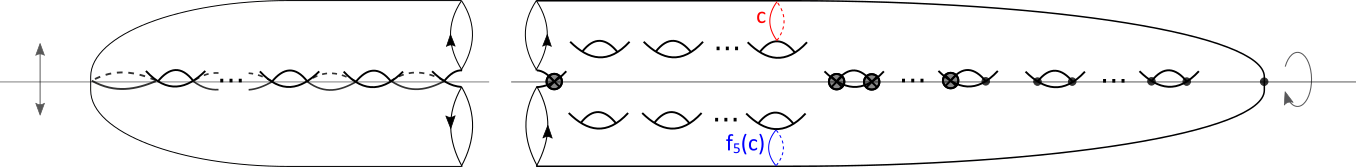}
		\caption{Involution $f_5$ with the set of fixed points marked.}
		\label{figIn5}\end{center}
\end{figure}

Finally let $r=0$, $k\geq 0$ and $k_-=0$. Then $g$ must be even. By Theorem 7.12 of \cite{Dug} (see also Theorem \ref{thm2}) there are four conjugacy classes of such involutions for fixed $k_+$, three with non-orientable quotient orbifolds and one with an orientable one. We will denote the representatives of the conjugacy classes with non-orientable quotient orbifolds by $f_6$, $f_7$ and $f_8$, in order. The involution $f_6$ always occurs. We can take its representative to be the reflection of an orientable surface of genus $k-1$ with $h$ 2-orbits blown up (Figure \ref{figIn6}). Note that in this case the set of fixed points is separating.
\begin{figure}[!htbp]\begin{center}
		\includegraphics[scale=.5]{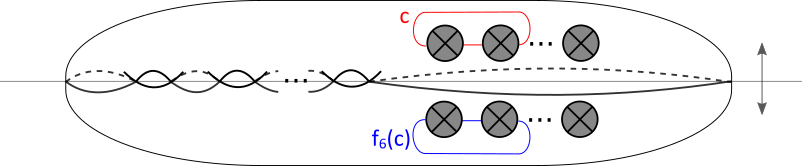}
		\caption{Involution $f_6$ with the set of fixed points marked.}
		\label{figIn6}\end{center}
\end{figure}
The involution $f_7$ occurs if and only if $2k<g$. We can take its representative to be the antipodism of a sphere with $h-1$ 2-orbits blown up and $k$ added handles (Figure \ref{figIn7}). Notice that the condition $2k<g$ is necessary for the surface to be non-orientable.
\begin{figure}[!htbp]\begin{center}
		\includegraphics[scale=.5]{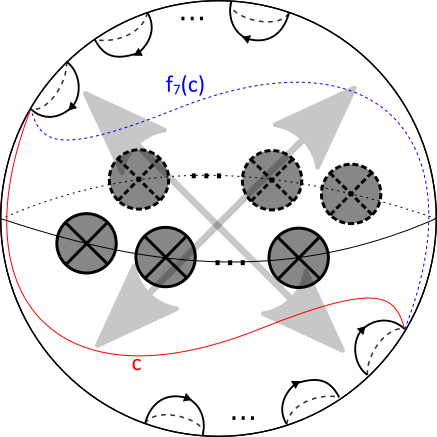}
		\caption{Involution $f_7$ with the set of fixed points marked. The two-sided ovals result from adding $k$ handles -- a surgery consisting in identification of boundary points on $k$ pairs of antipodal discs whose interiors are removed from the surface. Three of these discs are drawn.}
		\label{figIn7}\end{center}
\end{figure}
The involution $f_8$ occurs if and only if $2k<g-2$. We can take its representative to be the antipodism of a torus with $h-2$ 2-orbits blown up and $k$ added handles (Figure \ref{figIn8}). Notice that the condition $2k<g-2$ is necessary for the surface to be non-orientable.
\begin{figure}[!htbp]\begin{center}
		\includegraphics[scale=.5]{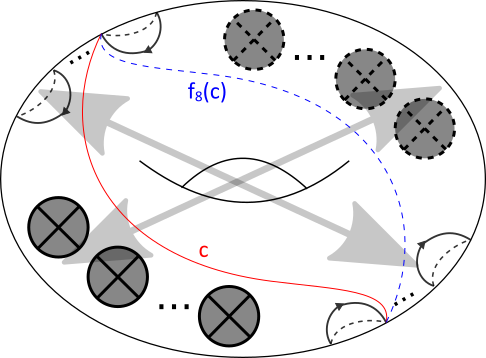}
		\caption{Involution $f_8$ with the set of fixed points marked. The two-sided ovals result from adding $k$ handles.}
		\label{figIn8}\end{center}
\end{figure}
The conjugacy class of involutions with an orientable quotient orbifold occurs if and only if $2k\equiv g+2\mod 4$, which condition is necessitated by the orientability of the quotient orbifold. We can take its representative to be the rotation by $\pi$ of an orientable surface of genus $2h+1$ with $k$ added handles about the axis passing through its centre (Figure \ref{figIn9}); we denote it by $f_9$.
\begin{figure}[!htbp]\begin{center}
		\includegraphics[scale=.5]{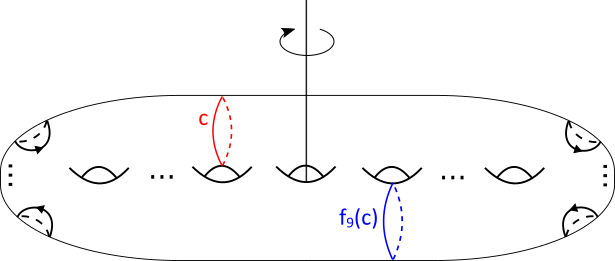}
		\caption{Involution $f_9$ with the set of fixed points marked. The two-sided ovals result from adding $k$ handles.}
		\label{figIn9}\end{center}
\end{figure}

\begin{rem}
If $k=0$, $f_7$ and $f_8$ coincide with $f_{01}$ and $f_{02}$ (Figure \ref{figfree2}).
\end{rem}

Summarising, from Theorems 7.7, 7.9, 7.12 in \cite{Dug} we obtain the following lemma.

\begin{lem}
The involutions $f_1,...,f_9$ described above are the only, up to conjugacy, involutions on a non-orientable surface $N_g$.
\end{lem}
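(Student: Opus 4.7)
The plan is an exhaustive case analysis on the invariants of the $\mathbb{Z}_2$-action associated to $f$, namely $r$, $k_+$, $k_-$, and the orientability of the quotient orbifold $N_g/\langle f\rangle$. These invariants are constrained by the parity relations $r + k_- \equiv 0 \pmod 2$ and $g \equiv r \pmod 2$ coming from the long relation and the Hurwitz--Riemann formula, together with the inequalities $r + 2k \leq g + 2$ in general and $r + 2k \leq g$ when the quotient is non-orientable. For each admissible tuple I would show that exactly one $f_i$ in the list has those invariants and, conversely, that each $f_i$ realises a valid tuple on some $N_g$; the latter amounts to checking that every surgery used in the construction has the announced effect on $(r, k_+, k_-)$ and on the sign of the quotient.

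I would split along the same five regimes used in Section \ref{sec:invs}: (A) $r = k = 0$, the free actions, already handled in the proof of Theorem \ref{noninv} and subsumed into $f_7, f_8$ when $k = 0$; (B) $r > 0$, $k = 0$, covered by $f_1$ via Theorem 7.7 of \cite{Dug}; (C) $r > 0$, $k_+ > 0$, $k_- = 0$, producing $f_2$ (non-orientable quotient) and $f_3$ (orientable quotient) via Theorem 7.9 of \cite{Dug}; (D) $k_- > 0$, producing $f_4$ and $f_5$ via Theorem 7.10 of \cite{Dug}; and (E) $r = 0$, $k_- = 0$, producing $f_6, f_7, f_8$ (non-orientable quotient) and $f_9$ (orientable quotient) via Theorem 7.12 of \cite{Dug}. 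In each regime Dugger's theorem asserts that the invariant tuple, supplemented where necessary by the $\epsilon$- and $DD$-invariants, determines the conjugacy class, so matching the explicit representative to its tuple suffices.

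The main obstacle is regime (E), where the coarse invariants $(r, k_+, k_-)$ and the sign do not separate the three non-orientable-quotient classes and Dugger's additional invariants are required. I would distinguish these by topological features that are manifestly invariant under conjugation: the fixed-point set of $f_6$ separates $N_g$, whereas those of $f_7$ and $f_8$ do not; $f_7$ and $f_8$ are then separated by the parity of the genus of the orientable surface from which the representative is built via added handles, corresponding to the sphere-based versus torus-based antipodism models. The realisability thresholds $2k < g$ for $f_7$, $2k < g - 2$ for $f_8$, and $2k \equiv g + 2 \pmod 4$ for $f_9$ are then forced by the Hurwitz--Riemann formula applied to the corresponding signature of $\Lambda$, while the coincidence of $f_7, f_8$ with $f_{01}, f_{02}$ when $k = 0$ reconciles regime (E) with regime (A). Once each Dugger class has been matched with one of $f_1, \ldots, f_9$ the lemma follows.
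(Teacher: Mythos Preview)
Your proposal is correct and follows essentially the same approach as the paper: the paper does not give a separate proof of this lemma but simply records it as a summary of the preceding case-by-case discussion in Section~\ref{sec:invs}, which is exactly the exhaustive case analysis on $(r,k_+,k_-,\text{sign})$ that you outline, with each regime resolved by the cited theorem of Dugger (7.7, 7.9, 7.10, 7.12) together with Theorem~\ref{thm2}. Your treatment of regime~(E) is slightly more explicit than the paper's about how $f_6,f_7,f_8$ are distinguished, but this is just an unpacking of Dugger's $\epsilon$- and $DD$-invariants (equivalently, conditions (2) and (3) of Theorem~\ref{thm2}) rather than a different argument.
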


\section{Normal closures of involutions}\label{sec:proofinv}

\subsection{Actions on homology groups}\label{hom}
We know from Lemma \ref{L2.2} that the existence of a curve $c$ such that $c$ and $f(c)$ form a standard pair of type 1 or 2 is a sufficient condition for the normal closure of $f$ to contain the commutator subgroup. We will need a way to show that a condition is also necessary, that is, a method of showing that the normal closure of a map does not contain the commutator subgroup.

Let $V_g=H_1(N_g;\mathbb{Z}_2)\cong \mathbb{Z}_2^g$. The standard generators for the homology group of a non-orientable surface of genus $2h+k$ are the homology classes of the curves $a_i,b_i,i\in\{1,...,h\}$ and $c_j,j\in\{1,...,k\}$ on Figure \ref{figvg}, where $g=2h+k$. We will denote by $[a]$ the homology class of a curve $a$ in $V_g$.

We have an intersection form $\left\langle \cdot ,\cdot \right\rangle : V_g \times V_ g\longrightarrow \mathbb{Z}_2$, where 
\begin{align*}
\left\langle [c_i],[c_j] \right\rangle& =\delta_{ij},&
\left\langle [a_i],[a_j] \right\rangle& =0,&
\left\langle [b_i],[b_j] \right\rangle& =0,\\
\left\langle [a_i],[b_j] \right\rangle& =\delta_{ij},&
\left\langle [a_i],[c_j] \right\rangle& =0,&
\left\langle [b_i],[c_j] \right\rangle& =0.
\end{align*}
$\mathcal{M}(N_g)$ acts on $V_g$ preserving $\left\langle \cdot , \cdot \right\rangle$.

Let $V_g^+=\left\{[h]\in V_g : \left\langle [h],[h] \right\rangle=0\right\}\cong\mathbb{Z}_2^{g-1}$. Notice that $V_g^+$ is generated by $[a_i],[b_i],i\in\{1,...,g\}$ and $[c_i]+[c_j],i,j\in\{1,...,k\},i\neq j$. Note that $[c]=[c_1]+...+[c_k]$ is the only element of $V_g$ such that for any $[h]\in V_g$ $\left\langle [h],[c] \right\rangle=\left\langle [h],[h] \right\rangle$, which implies that any element $f\in \mathcal{M}(N_g)$ preserves $[c]$. If $k$ is even, $[c]$ lies in $V_g^+$ and $\mathcal{M}(N_g)$ acts on $V_g^+/\left\langle [c]\right\rangle\cong \mathbb{Z}_2^{g-2}$.

Let $a$, $b$ be two-sided curves on $N_g$ such that $i(a,b)=1$. The automorphism of $V_g$ induced by $T_aT_b$ is non-trivial, as it maps $[b]$ onto $[a]+[b]$; the induced automorphisms of $V_g^+$ and $V_g^+/\langle c \rangle$ are likewise non-trivial. Since $T_aT_b\in[\mathcal{M}(N_g),\mathcal{M}(N_g)]$ by Lemma \ref{L2.1}, it follows that the normal closure of an element acting trivially on any of the spaces $V_g$, $V_g^+$ and $V_g^+/\langle c \rangle$ does not contain the commutator subgroup of $\mathcal{M}(N_g)$. We will present three examples and trust the reader to apply them in further cases.

\begin{ex}\label{ex1}

We take the reflection of an orientable surface of genus $h$ with $k$ non-isolated fixed points blown up (Figure \ref{figvg}).
\begin{figure}[!htbp]\begin{center}
\includegraphics[scale=.5]{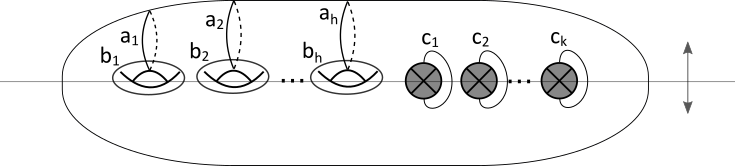}
\caption{Surface $N_g$ in Example \ref{ex1} with standard homology basis.}
\label{figvg}\end{center}
\end{figure}
This involution acts on generators of $V_g$ as follows: $[a_i]\mapsto [a_i]$, $[b_i]\mapsto [b_i]$, $[c_j]\mapsto [c_j]$. The action on $V_g$ is trivial. It is easy to show that the same holds for the hyperelliptic involution of an orientable surface with $k$ isolated points blown up.

\end{ex}

\begin{ex}\label{ex2}

We take the hyperelliptic involution of an orientable surface of genus $h$ with one 2-orbit blown up (Figure \ref{figvg+}).
\begin{figure}[!htbp]\begin{center}
\includegraphics[scale=.5]{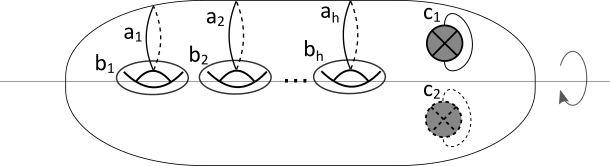}
\caption{Surface $N_g$ in Example \ref{ex2} with standard homology basis.}\label{figvg+}\end{center}
\end{figure}
This involution acts on generators of $V_g$ as follows: $[a_i]\mapsto [a_i]$, $[b_i]\mapsto [b_i]$, $[c_1]\mapsto [c_2]$, $[c_2]\mapsto [c_1]$. The action on $V_g$ is not trivial, so we look to $V_g^+=\left\langle [a_i],[b_i],[c_1]+[c_2] \right\rangle$. The involution takes $[c_1]+[c_2]$ to $[c_2]+[c_1]=[c_1]+[c_2]$, and the action on $V_g^+$ is trivial. An analogous result for the reflection of a surface with one 2-orbit blown up is easily obtained.

\end{ex}

\begin{ex}\label{ex3}

We take the reflection of an orientable surface of genus $h$ with two 2-orbits blown up (Figure \ref{figvg+c}).
\begin{figure}[!htbp]\begin{center}
\includegraphics[scale=.5]{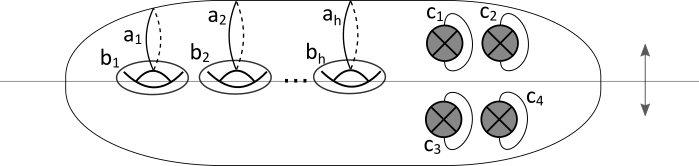}
\caption{Surface $N_g$ in Example \ref{ex3} with standard homology basis.}\label{figvg+c}\end{center}
\end{figure}
As above, the action on $V_g$ is not trivial. We have $V_g^+=\left\langle [a_i],[b_i],[c_1]+[c_2],[c_1]+[c_3],[c_1]+[c_4]\right\rangle$ and the action is as follows: $[a_i]\mapsto [a_i]$, $[b_i]\mapsto [b_i]$, $[c_1]+[c_2]\mapsto [c_3]+[c_4]$, $[c_1]+[c_3]\mapsto [c_1]+[c_3]$, $[c_1]+[c_4]\mapsto [c_2]+[c_3]$. The action on $V_g^+$ is not trivial, but notice that $[c_3]+[c_4]=[c_1]+[c_2]\mod(c)$ and $[c_2]+[c_3]=[c_1]+[c_4]\mod(c)$, which means that the action on $V_g^+/\left\langle [c]\right\rangle$ is trivial. An analogous result for the hyperelliptic involution of a surface with two 2-orbits blown up is easily obtained.

\end{ex}

\begin{rem}

It is clear that if $K_1$ is the kernel of the action of $\mathcal{M}(N_g)$ on $V_g$, $K_2$ is the kernel of the action of $\mathcal{M}(N_g)$ on $V_g^+$ and $K_3$ is the kernel of the action of $\mathcal{M}(N_g)$ on $V_g^+/\left\langle [c]\right\rangle$, we have $K_1\lneqq K_2\lneqq K_3$. Indeed, the examples we have shown prove that there exist involutions in each of $K_1$, $K_2\setminus K_1$ and $K_3\setminus K_2$.

\end{rem}

\subsection{Normal closures}
\begin{proof}[Proof of Theorem \ref{inv}.]

Let $f\in\mathcal{M}(N_g)$ be an involution, $r$, $k$, $k_+$, $k_-$ the parameters describing the structure of the set of fixed points of $f$ and $h$ the genus of $N_g/\langle f \rangle$.

\textbf{Case 1:} $r>0$ and $k=0$. The only involution for $r>0$ and $k=0$, up to conjugacy, is $f_1.$ It is easy to see that if $h\geq 3$, we can find a curve $c$ such that $c$ and $f_1(c)$ form a type 2 standard pair, as in Figure \ref{figIn1}.
Notice that the genus of the surface is equal to $g=2(r/2-1)+2h=r+2h-2$, and so the condition in terms of $g$ and $r$ is $g-r=2h-2\geq 4$.
Now let us consider the case with $h=2$. Then the induced map $f_{1*}$ acts trivially on the space $V_{g}^{+}/\left\langle c \right\rangle$ (see Example \ref{ex3} in Section \ref{hom}), and therefore the normal closure of $f_1$ cannot contain the commutator subgroup. If $h=1$, the action is trivial on the space $V_g^+$ (see Example \ref{ex2}). $h=0$ does not occur. The condition is necessary.

\textbf{Case 2:} $k>0$ and $r+k_->0$.

\textbf{Subcase 2a:} $N_g/\langle f \rangle$ is non-orientable. The involutions satisfying these conditions and resulting in a non-orientable surface orbifold are $f_2$ and $f_4$. For the involution $f_2$ we can find a curve $c$ such that $c$ and $f_2(c)$ form a type 1 standard pair for $h\geq 1$ (see Figure \ref{figIn2}). By the definition of $f_2$ the orbifold $N_g/\left\langle f_2 \right\rangle$ is non-orientable, hence $h\geq 1$ is always satisfied. For the involution $f_4$ we can find a curve $c$ such that $c$ and $f_4(c)$ form a type 1 standard pair if $h\geq 1$  (see Figure \ref{figIn4}). By the definition of $f_4$, $k_->0$ and the orbifold $N_g/\left\langle f_4 \right\rangle$ is non-orientable, hence $h\geq 1$ is also always satisfied.

\textbf{Subcase 2b:} $N_g/\langle f \rangle$ is orientable. The involutions satisfying these conditions and resulting in an orientable surface orbifold are $f_3$ and $f_5$.
For the involution $f_3$ we can find a curve $c$ such that $c$ and $f_3(c)$ form a type 2 standard pair if $h\geq 1$ (see Figure \ref{figIn3}).
Since $g=2(k-1)+r+4h$, this translates to $g-r-2k=4h-2\geq 2$. If $h=0$, $f_{3*}$ acts trivially on the space $V_g$ (see Example \ref{ex1}), hence the condition is necessary.
Likewise, for the involution $f_5$ we can find a curve $c$ such that $c$ and $f_5(c)$ form a type 2 standard pair if $h\geq 1$ (see Figure \ref{figIn5}).
Since $g=2(k-1)+r+4h$, this translates to $g-r-2k=4h-2\geq 2$. If $h=0$, $f_{5*}$ acts trivially on the space $V_g$ (see Figure \ref{figIn5h}). Therefore the condition is necessary.
\begin{figure}[!htbp]\begin{center}\includegraphics[scale=.4]{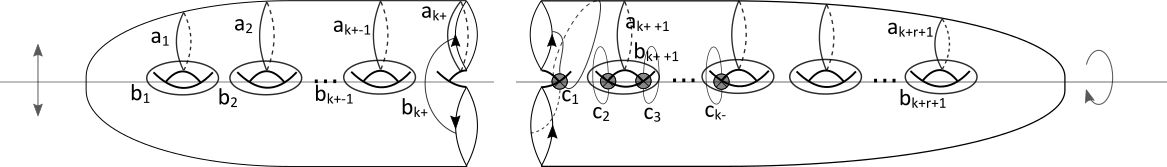}
\caption{Involution $f_5$ on the surface with standard homology base for $h=0$.}
\label{figIn5h}\end{center}
\end{figure}

\textbf{Case 3:} $r=k_{-}=0$.

\textbf{Subcase 3a:} the set of fixed points is separating. This is satisfied by the involution $f_6$.
For the involution $f_6$ we can find a curve $c$ such that $c$ and $f_6(c)$ form a type 2 standard pair if $h-1\geq 3$ (see Figure \ref{figIn6}). Since $g=2h+2(k-1)$, this translates to $g-2k=2h-2\geq 4$. If $h=2$, the induced map $f_{6*}$ acts trivially on the space $V_{g}^{+}/\left\langle c \right\rangle$, and if $h=1$, the action is trivial on the space $V_g^+$; $h=0$ does not occur. The condition is necessary (see Examples \ref{ex3} and \ref{ex2}).

\textbf{Subcase 3b:} the set of fixed points is non-separating. This is satisfied by the involutions $f_7$ and $f_8$ when the orbifold is non-orientable and $f_9$ when the orbifold is orientable.
The case when $k=0$ and the involution is free was shown in the proof of Theorem \ref{noninv} in Section \ref{sec:proofnoninv}; we now assume $k>0$.
For the involution $f_7$ we can find a curve $c$ such that $c$ and $f_7(c)$ form a type 1 standard pair for $h-1\geq 1$ (see Figure \ref{figIn7}), which is always satisfied, as the surface is nonorientable (by the definition of $f_7$, $h-1$ is the number of pairs of crosscaps on $N_g$).
For the involution $f_8$ we can find a curve $c$ such that $c$ and $f_8(c)$ form a type 1 standard pair for $h-2\geq 1$ (see Figure \ref{figIn8}), which is also always safisfied for analogous reasons.
For the involution $f_9$ we can find a curve $c$ such that $c$ and $f_9(c)$ form a type 1 standard pair if $2h+1\geq 1$ (see Figure \ref{figIn9}). This is always satisfied. This completes the proof of Theorem \ref{inv}.
\end{proof}
\begin{proof}[Proof of Theorem \ref{thm:normgens}.]
The involution normally generating $\mathcal{M}(N_g)$ will be a variant of $f_4$ with $k_+=0$, which can be realised as a rotation by $\pi$ of an orientable surface of genus $(k_-+r-2)/2$ with $h$ 2-orbits blown up and $k_-$ isolated fixed points blown up. We will denote it by $f$.

	By Theorems \ref{inv} and \ref{twist} the normal closure of $f$ contains the twist subgroup $\mathcal{T}(N_g)$. In order to apply Theorem \ref{thm:twsb} to show that $f$ does not lie in the twist subgroup, we calculate the action of the induced map $f_{*}$ on the homology group $H_1(N_g;\mathbb{R})$.
	
		\begin{figure}[!htbp]\begin{center}
			\includegraphics[scale=.45]{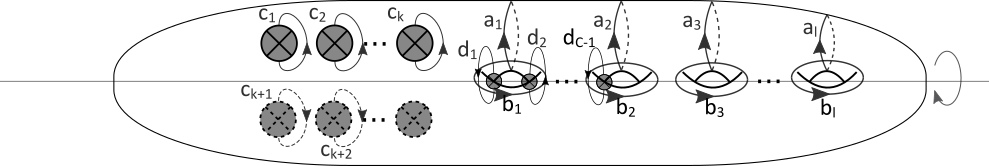}
			\caption{Involution $f$ with standard base for $H_1(N_g;\mathbb{R})$.}
			\label{figf4ch}\end{center}
	\end{figure}

The action is
\begin{equation*}
\begin{split}
a_i\mapsto & -a_i-2\sum_{j=2i}^{k_-}d_j,\quad i=1,...,\lceil k_-/2\rceil-1\\
a_i\mapsto & -a_i,\quad i=\lceil k_-/2\rceil,...,l\\
b_i\mapsto & -b_i-2(d_{2i-1}+d_{2i}),\quad i=1,...,\lceil k_-/2\rceil\\
b_i\mapsto & -b_i,\quad i=\lceil k_-/2\rceil+1,...,l\\
c_i\mapsto & c_{i+h},\quad i=1,...,h-1\\
c_h\mapsto & -\sum_{i=1}^{2h-1}c_i-\sum_{j=1}^{k_-}d_j\\
c_{i+h}\mapsto & c_i,\quad i=1,...,h-1\\
d_j\mapsto & d_j,\quad j=1,...,k_--1
\end{split}
\end{equation*}

The determinant of $F_*$ is equal to $(-1)^h$. By Theorem \ref{thm:twsb} $f\notin\mathcal{T}(N_g)$ if $h$ is odd. For $N_g$ we can construct $f$ to have any chosen $h\geq 1$ by adjusting $r$ and $k_-$ according to the Hurwitz-Riemann formula, including any odd $h$. This completes the proof of Theorem \ref{thm:normgens}.
\end{proof}

\section*{Acknowledgements}

This paper is part of the author's Ph.D. thesis, written under the supervision of B{\l}a\.zej Szepietowski at the University of Gdańsk. The author wishes to express her gratitude for the supervisor's helpful suggestions.

\end{document}